\documentclass{amsart}
\usepackage{mathrsfs}
\usepackage{epsfig}
\usepackage{graphicx}
\usepackage{amsthm,amsmath,amsfonts,amssymb,amscd}
\usepackage{cancel}
\usepackage{hyperref}

\usepackage{enumerate}
\def\AA{{\sf A}}

\def\JJ{{\sf {J}}}

\def\1{{\sf 1}}\def\GG{{\sf {G}}}
\def\0{{\sf 0}}
\def\e{{\sf e}}
\def\a{{\sf a}}
\def\b{{\sf {b}}}

\def\c{{\sf {c}}}\def\d{{\sf {d}}}
\def\w{{\sf {w}}}
\def\v{{\sf v}}

\def\z{{\sf z}}
\def\g{{\sf g}}
\def\h{{\sf h}}
\def\II{{\sf I}}
\def\so{{\sf supp}}

\def\qed{\hfill {\hbox{\footnotesize{$\Box$}}}}
\def\L{{\mathscr  L}}
\def\P{{\mathscr  P}}
\def\B{{\mathcal B}}
\def\C{{\mathscr C}}
\def\M{{\mathcal M}}

\def\RR{\mathbb{R}}

\def\qed{\hfill {\hbox{\footnotesize{$\Box$}}}}
\def\0{{\sf {\sf 0}}}

\def\a{{\sf a}}

\def\w{{\sf w}}
\def\y{{\sf y}}

\def\v{{\sf v}}
\def\0{{\sf 0}}

\def\so{{\sf supp}}

\def\proof{{\noindent\bf Proof.}\hskip 0.3truecm}
\def\BBox{\kern  -0.2cm\hbox{\vrule width 0.15cm height 0.3cm}}

\def\AA{{\sf A}}

\def\B{\mathcal{B}}
\def\E{\mathcal{E}}

\def\DD{{\sf D}}
\def\F{\mathcal{F}}
\def\G{\mathcal{G}}

\def\I{\mathcal{I}}

\def\K{\mathcal{K}}
\def\L{\mathcal{L}}
\def\M{\mathcal{M}}
\def\P{\mathcal{P}}

\def\GG{{\sf G}}
\def\II{{\sf I}}
\def\JJ{{\sf J}}

\def\LL{{\sf L}}
\def\MM{{\sf M}}

\def\RR{\mathbb{R}}

\def\BBox{\kern  -0.2cm\hbox{\vrule width 0.15cm height 0.3cm}}

\oddsidemargin .cm \evensidemargin 1.cm \textwidth=16.5cm
\graphicspath{{../Portada/}}

\include {mak}
\begin{document}

\newtheorem{propo}{Proposition}[section]
\newtheorem{lemma}[propo]{Lemma}
\newtheorem{theorem}[propo]{Theorem}
\newtheorem{corollary}[propo]{Corollary}
\newtheorem{definition}[propo]{Definition}
\newtheorem{ex}[propo]{Example}
\newtheorem{rem}[propo]{Remark}

\def\1{{\sf 1}}
\def\0{{\sf 0}}
\def\e{{\sf e}}
\def\a{{\sf a}}
\def\v{{\sf v}}
\def\y{{\sf y}}
\def\z{{\sf z}}
\def\g{{\sf g}}
\def\h{{\sf h}}
\def\II{{\sf I}}
\def\JJ{{\sf J}}
\def\so{{\sf supp}}
\def\proof{{\noindent  Proof.}\hskip 0.3truecm}
\def\BBox{\kern  -0.2cm\hbox{\vrule width 0.15cm height 0.3cm}}
\def\L{{\mathcal  L}}\def\P{{\mathcal  P}}
\def\B{{\mathcal B}}
\def\K{{\mathcal K}}\def\M{{\mathcal M}}
\def\E{{\mathcal E}}
\def\F{{\mathcal F}}
\def\G{{\mathcal G}}
\oddsidemargin 16.5mm
\evensidemargin 16.5mm

\thispagestyle{plain}

\vspace{5cc}
\begin{center}

{\Large\bfseries
The $M$-matrix group inverse problem for recoverable complete networks
}

\vspace{1em}
\renewcommand{\thefootnote}{\fnsymbol{footnote}}
{\large\itshape
\'Angeles Carmona\footnotemark[1],
Andr\'es M. Encinas\footnotemark[1],
Sweta Patra\footnotemark[2],
K.C. Sivakumar\footnotemark[2]
}
\footnotetext[1]{Departament de Matem\`atiques, Universitat Polit\`ecnica de Catalunya, 08034 Barcelona, Spain 
(angeles.carmona@upc.edu, andres.marcos.encinas@upc.edu)}

\footnotetext[2]{Department of Mathematics, Indian Institute of Technology Madras, Chennai 600036, India (swetapatra.math@gmail.com, kcskumar@iitm.ac.in)}

\vspace{1em}

\parbox{0.7\textwidth}{
\small
\noindent\textbf{Abstract.}
This study investigates the conditions under which the group inverse of a singular, irreducible, symmetric $M$-matrix retains the $M$-matrix property. By concentrating on a structured subclass derived from rank-one perturbations of a diagonal matrix, and inspired by recoverable complete networks, we obtain explicit analytical results. Utilizing both matrix-theoretic methodologies and potential theory on networks, we establish necessary and sufficient conditions for the $M$-property of the specified network in terms of conductances and associated Doob potentials. This framework facilitates the construction of families of singular, irreducible $M$-matrices whose group inverses maintain the $M$-matrix structure. Our findings offer novel insights into this research domain and enhance the relationship between $M$-matrix theory and network analysis.

\vspace{0.5em}

\noindent\textbf{Keywords:} $M$-matrix, group inverse, inverse $M$-matrix problem, recoverable complete networks.

\noindent\textbf{MSC (2020):} 15A09, 05C50.
}

\end{center}

\vspace{1cc}

\section{Introduction}

In the framework of $M$-matrix theory, one of the central and long-standing problems is to determine the conditions under which the group inverse of a singular, irreducible, symmetric $M$-matrix remains an $M$-matrix. 
Let $\M_n(\RR)$ denote the set of all real $n\times n$ matrices.
A matrix ${\sf A}\in \M_n(\RR)$ is called a $Z$-matrix if its off-diagonal entries are nonpositive. Any such matrix can be expressed in the form ${\sf A}=s{\sf I-B}$, $s\in \RR$ and $\sf B\ge 0$ (i.e. all the entries of $\sf B$ are nonnegative). When $s\ge \rho(\sf B)$ ({\it spectral radius} of $\sf B$), $\sf A$ is an $M$-matrix. There are many equivalent characterizations of $M$-matrix listed in the book \cite{BP1994}. Among these, a fundamental result for the symmetric case states that a symmetric $Z$-matrix is an $M$-matrix iff it is positive semidefinite.

Throughout this paper, a network (or graph) is understood to be finite, connected, and simple; that is, it contains neither loops nor multiple edges.
Since the off diagonal entries of a $Z$-matrix are non positive, each irreducible and symmetric $Z$-matrix can be identified as a Schr\"odinger operator defined on a finite, connected network. 
A Schr\"odinger operator on a network is obtained by adding some values, called potentials,  to the diagonal of its combinatorial Laplacian. 
In the matrix representation of a Schr\"odinger operator on a finite network, each off-diagonal entry is nonzero (and therefore negative) iff the corresponding vertices are adjacent.
 In this case, the absolute value of the off-diagonal entry of the matrix is the conductance of the edge joining those two adjacent vertices. Moreover, the irreducibility of the matrix is equivalent to the connectedness of the underlying network.
If the matrix associated with a Schr\"odinger operator is positive semidefinite, then it is an $M$-matrix. Within this framework, the group inverse of the matrix associated with a Schr\"odinger operator corresponds to the matrix associated with the Green function. This naturally leads to the problem of determining the conditions under which the matrix associated with the Green function is an $M$-matrix.
Equivalently, we characterize when the group inverse of a singular, positive semidefinite Schr\"odinger operator on a finite, connected network is also a singular, positive semidefinite Schr\"odinger operator on another suitable network. 

In this work, we are interested in a structured class of singular, irreducible $M$-matrices arising from a distinguished family of complete networks that are recoverable. We characterize the class of singular, irreducible, symmetric $M$-matrices arising from conductances and positive potentials associated with a singular, positive semidefinite Schr\"odinger operator on a recoverable complete network. We prove that every matrix in this class admits a remarkably simple structure: it can be written as a rank-one perturbation of a positive diagonal matrix.
This structural characterization yields an explicit formula for the group inverse, which is equivalent to computing the Green function of the underlying network.

The main contribution of this study is to establish the necessary and sufficient conditions under which the group inverse of such matrices preserves the $M$-matrix structure. From another perspective, we characterize when a recoverable complete network satisfies the $M$-property with respect to a positive function.
Several illustrative examples and counterexamples are presented to demonstrate the sharpness of our results and to clarify the limitations of $M$-structure preservation in this setting.

The idea of recoverable networks is adopted from electrical network theory. The motivation is that, for such networks, their edge conductances admit a multiplicative representation, resulting in a reduction in the number of free parameters, from $\frac{n(n-1)}{2}$ to $n$ ($n\ge 4)$. Despite this reduction, the graph remains complete and thus preserves the adjacency among all vertices. Furthermore, the matrix corresponding to the singular, positive semidefinite Schr\"odinger operator on an $n$-vertex recoverable complete network, can be expressed as a positive scalar multiple of the Schur complement of the first diagonal entry (nonzero) of the matrix corresponding to the singular, positive semidefinite Schr\"odinger operator on the star network on $n+1$ vertices.

We conclude this introduction with a brief survey of related work. Suppose $\sf B\in \M_n(\mathbb{R})$ is an (entrywise) nonnegative and irreducible matrix. Then, the matrix $\sf A=\rho(B)I-B$ is a singular, irreducible $M$-matrix.
In \cite{NPW1982}, Neumann et al. first raised the question of when the group inverse of $\sf A$, that is, $\sf A^\#$ is an $M$-matrix.
Then, it has been shown that if $\sf B$ is a rank-one matrix, then $\sf A^{\#}$ is an $M$-matrix \cite{DN1984}. This was further investigated in \cite{KN1995*}, where the case of the matrix $\sf B$ having only a few distinct eigenvalues was considered. Among other things, they presented a result in which the $M$-matrix $\sf A$ satisfies $\sf A^\#=\beta A^\top$ for some $\beta>0$. In \cite{CN1997}, it was shown that if $\sf B$ is a stochastic matrix, then $\sf (I-B)^\#$ is an $M$-matrix only when $\sf B$ lies in a small wedge of a rank-one nonnegative matrix. We refer the reader to \cite{KLS2021} for results that generalize some of these earlier findings.
Recently, Carmona et al. \cite{CPS2026} investigated real matrices of the form $\sf A= (1-\rho)I+\rho uv^\top$, where $\sf u,v\in \RR^n$ satisfy $\sf u,v\ge e$ (entrywise) and $\sf \rho<0$, where  $\sf e$ denotes the all-ones vector. It was shown that both $\sf A$ and its group inverse $\sf A^\#$ are singular, irreducible $M$-matrices. These results further motivate the questions considered in this work.

\section{Preliminaries}
Let $(V,E)$ be a finite, connected graph, without loops or multiple edges, where $V=\{x_1,\ldots,x_n\}$ and $E$ are the sets of vertices and edges, respectively. The set of real valued functions on $V$ is denoted by $\C(V)$ and it is endowed with the standard inner product $\langle\cdot,\cdot\rangle$, defined by $\langle u,v\rangle=\sum\limits_{x\in V}u(x)\,v(x)$; $u,v\in \C(V)$. In addition, the norm of $u\in \C(V)$ is defined as $||u||=\big(\sum\limits_{x\in V}u(x)^2\big)^{\frac{1}{2}}.$
Given $u\in \C(V)$, the subspace of $\C(V)$ that is orthogonal to $u$ is denoted as  $u^\bot$. Moreover, for any $x\in V$, $\varepsilon_x$ stands for the {\it Dirac function}, which is defined as follows: $\varepsilon_x(y)=1$ when $y=x$ and $\varepsilon_x(y)=0$ when $y\neq x;~ y\in V.$ Note that, the set $\{\varepsilon_x:x\in V\}$ is a basis for the vector space $\C(V)$. A function $u\in \C(V)$ is called a positive function if $u(x)>0$ for every $x\in V.$ Such a function is called a {\it weight} whenever $||u||=1$ and
the set of weights is denoted by $\Omega(V)$.

A {\it network} $\Gamma$ with underlying graph $(V,E)$ and {\it conductance} $c$ is the tuple $(V,E,c)$ where 
$c\colon V\times V\longrightarrow [0,\infty)$  is a symmetric function, i.e. $c(x,y)=c(y,x)$ for any  $x,y\in V$, satisfying that $c(x,y)>0$ iff $x$ is adjacent to $y$, i.e. $x\sim y$. In particular, $c(x,x)=0$ for any $x\in V$.
The {\it degree function of the network}
$\Gamma$ is the function $\kappa\in \C(V)$ defined by $\kappa(x)=\sum\limits_{y\in V}c(x,y),\; x\in V$. If $c(x,y)=1$ for $x\sim y$, then $\kappa(x)$ is  the number of vertices adjacent to $x$. 
 
The {\it combinatorial Laplacian}, or simply the {\it Laplacian}, of the network $\Gamma$ is the endomorphism  on $\C(V)$ that assigns to any $u\in\C(V)$ the function  \begin{equation}
\label{laplaciano}\L(u)(x)\displaystyle=\sum\limits_{y\in V}
c(x,y)\big(u(x)-u(y)\big)=\kappa(x)u(x)-\sum\limits_{y\in V}
c(x,y)\,u(y),\hspace{.25cm}x\in V.
\end{equation}
It is well known, and easy to prove, that the  Laplacian of a network is singular, self-adjoint (with respect to the inner product $\langle \cdot,\cdot\rangle$) and positive semidefinite. Moreover, $\L(u)=0$ iff $u$ is a constant function, because the network is connected.

For any $q\in\C(V)$, the {\it Schr\"odinger operator} on $\Gamma$ with potential $q$ is the endomorphism on $\C(V)$ that assigns to any $u\in\C(V)$ the function $\L_q(u)=\L (u) +qu$. 

For $\omega\in \Omega(V)$, the function $q_{\omega}=-\omega^{-1}\L(\omega)$ is called the {\it (Doob)-potential determined by $\omega$}. Clearly, $q_\omega=0$ iff $\omega$ is constant and the  Schr\"odinger operator with potential $q_\omega=0$ is the Laplacian. More generally, since $\langle \omega,q_\omega\rangle=0$, we infer that $q_\omega$ takes positive and negative values, except when $\omega$ is a scalar multiple of the constant function. On the other hand, from the identity \eqref{laplaciano}, we deduce that 
\begin{equation}
\label{potencial}
q_\omega(x)=-\kappa(x)+\omega(x)^{-1}\sum\limits_{y\in V}c(x,y)\omega(y), \hspace{.25cm}\hbox{for any $x\in V$.}
\end{equation}

The following result characterizes all the singular, positive semidefinite Schr\"odinger operators on $\Gamma$, see  \cite[Proposition 3.3]{BCE05}. 

 \begin{propo}
\label{positividad} Given  $q\in \C(V)$, the Schr\"odinger operator  $\L_q$ on $\Gamma$ is singular, positive semidefinite iff there exists $\omega\in \Omega(V)$  such that   $q=q_\omega$. Moreover, $ \omega$ is uniquely determined and $\L_{q_\omega}(u) =0$ iff  $u=a\omega$, $a\in \RR$. 
\end{propo}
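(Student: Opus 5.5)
The plan is to prove the two implications separately, using the ground-state (Doob) transform to reduce the positive semidefiniteness of $\L_{q_\omega}$ to that of a weighted Laplacian. The uniqueness statements will then follow from the connectedness of $\Gamma$ and Perron--Frobenius theory.

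\emph{Sufficiency.} Assume $q=q_\omega$ with $\omega\in\Omega(V)$. By \eqref{potencial}, $\L_{q_\omega}(\omega)=\L(\omega)+q_\omega\omega=0$, so the operator is singular. For positive semidefiniteness I will use the identity, valid for every $u\in\C(V)$,
$$\langle \L_{q_\omega}(u),u\rangle=\frac12\sum_{x,y\in V}c(x,y)\,\omega(x)\omega(y)\Big(\frac{u(x)}{\omega(x)}-\frac{u(y)}{\omega(y)}\Big)^2 .$$
To verify it, write $u=\omega v$ and expand both sides. The diagonal terms combine using $q_\omega(x)\omega(x)^2=-\kappa(x)\omega(x)^2+\omega(x)\sum_y c(x,y)\omega(y)$, and then the $\kappa$ terms cancel. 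The right-hand side is nonnegative, so $\L_{q_\omega}$ is positive semidefinite. Moreover, it vanishes iff $u/\omega$ is constant on every edge, hence constant on $V$ by connectedness. Therefore $\L_{q_\omega}(u)=0$ iff $u=a\omega$. This uses that a positive semidefinite self-adjoint operator has $\L_q(u)=0$ iff $\langle\L_q(u),u\rangle=0$.

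\emph{Necessity.} Assume $\L_q$ is singular and positive semidefinite. Its smallest eigenvalue is then $0$. Choose $s>\max_x(\kappa(x)+q(x))$ and write $\L_q=s\II-\mathsf{B}$, where $\mathsf{B}$ is nonnegative, irreducible (since $\Gamma$ is connected), and has positive diagonal. The eigenvalue $0$ of $\L_q$ corresponds to the largest eigenvalue $s=\rho(\mathsf{B})$ of $\mathsf{B}$. By Perron--Frobenius this eigenvalue is simple with a positive eigenvector, which I normalize to obtain $\omega\in\Omega(V)$. Then $\L(\omega)+q\omega=0$ gives $q=-\omega^{-1}\L(\omega)=q_\omega$.

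\emph{Uniqueness.} If $q_\omega=q_{\omega'}$, then $\omega'$ lies in the kernel of $\L_{q_\omega}$, which is spanned by $\omega$. Thus $\omega'=a\omega$, and positivity together with the normalization forces $a=1$.

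The main obstacle I expect is the algebraic verification of the quadratic-form identity; the Perron--Frobenius step is standard. Alternatively, one can bypass the identity by observing that $\mathsf{D}_\omega\L_{q_\omega}\mathsf{D}_\omega$, with $\mathsf{D}_\omega$ the diagonal matrix of $\omega$, is exactly the Laplacian with conductances $c(x,y)\omega(x)\omega(y)$. That Laplacian is positive semidefinite with kernel spanned by the constants, and the conclusion follows by congruence.
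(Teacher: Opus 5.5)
The paper gives no proof of this proposition; it imports it from \cite[Proposition 3.3]{BCE05}. Your argument is correct and self-contained, and it follows the standard route.

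For sufficiency and the kernel, your ground-state (Doob) identity checks out: use $\kappa(x)+q_\omega(x)=\omega(x)^{-1}\sum_{y}c(x,y)\omega(y)$ and then symmetrize. It gives positive semidefiniteness and $\ker\L_{q_\omega}={\rm span}\{\omega\}$ at once. Your congruence remark, that $\mathsf D_\omega\LL_{q_\omega}\mathsf D_\omega$ is the Laplacian with conductances $c(x,y)\omega(x)\omega(y)$, is the same identity in matrix form. For the converse, Perron--Frobenius supplies the positive null vector, and uniqueness of $\omega$ follows from the kernel description.

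One step deserves an extra line: the identification $s=\rho(\mathsf B)$. The eigenvalues of $\mathsf B$ are $s-\lambda$, where $\lambda\ge 0$ runs over the spectrum of $\LL_q$, so $s$ is the largest of them and $s\le\rho(\mathsf B)$. Since $\rho(\mathsf B)$ is itself an eigenvalue of the nonnegative matrix $\mathsf B$, also $\rho(\mathsf B)\le s$. Positivity of the diagonal of $\mathsf B$ is not needed; irreducibility suffices.

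If you prefer to avoid Perron--Frobenius, there is a more elementary route. Take $0\neq u$ in the kernel. Because the off-diagonal entries are nonpositive, $\langle\L_q(|u|),|u|\rangle\le\langle\L_q(u),u\rangle=0$, so by positive semidefiniteness $|u|$ is also in the kernel. Evaluating $\L_q(|u|)=0$ at a zero of $|u|$ forces $|u|$ to vanish at every neighbour of that vertex, hence everywhere by connectedness. So $|u|>0$, and normalizing it gives $\omega$.
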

In light of the above proposition, we remark that for a given $\omega\in \Omega(V)$, the associated Schr\"odinger operator $\L_{q_\omega}$ is a singular, positive semidefinite operator.

\section{Networks having the $M$-property, with respect to a weight}

For the network $\Gamma$ and the weight $\omega \in \Omega(V)$,  the properties of the Schr\"odinger operator $\L_{q_\omega}$ determine that given  $f\in \C(V)$, the  {\it Poisson equation}  $\L_{q_\omega}(u)=f$  has solution iff $f\in\omega^\bot$. In this case, there exists a unique solution such that $u\in\omega^\bot$, see \cite[Proposition 2.1]{CEM2014}. Therefore, $\L_{q_\omega}$ is an isomorphism on $\omega^\bot$ whose inverse is called the {\it Green operator of $\Gamma$ with respect to $ \omega$}. 

Let $\P_\omega:\C(V)\longrightarrow \C(V)$ denote the projection on ${\rm span}\{\omega\}$ and is defined as  $\P_\omega(f)=\langle f,\omega\rangle\omega$. Then, the Green operator with respect to $\omega$ can be identified with the endomorphism $\G_\omega:\C(V)\longrightarrow \C(V)$ 
assigning to any $f\in \C(V)$ the unique solution of the Poisson equation with data $f-\P_\omega(f)$ belonging to $\omega^\bot$. 

It is easy to prove that $\G_\omega$ is a singular, self-adjoint, positive semidefinite  operator. Moreover, $\G_\omega(u)=0$ iff $u=a\omega$, $a\in \RR$ and 
\begin{equation}
\label{grupo}
\L_{q_\omega}\circ \G_\omega=\G_\omega\circ\L_{q_\omega}=\I-\P_\omega,
\end{equation}
where  $\mathcal{I}$ is the identity operator. 

The {\it Green function with respect to $\omega$} is $G_\omega:V\times
V\longrightarrow\mathbb{R}$ defined as $G_\omega(x,y)=\langle \G_\omega(\varepsilon_y),\varepsilon_x\rangle $, \;$x,y\in V$. Since for $x\in V$, $\varepsilon_x$ cannot be a multiple of the weight $\omega$, we get that $G_\omega(x,x)=\langle \G_\omega(\varepsilon_x),\varepsilon_x\rangle >0$ for any $x\in V$. In addition, given $y\in V$, the function $u=G_\omega(\cdot,y)$ is characterized as the unique solution of the Poisson equation 
$$\L_{q_\omega}(u)=\varepsilon_y-\omega(y)\omega,$$ 
belonging to $\omega^\bot$.

\begin{rem}
We can extend the definition of the Doob potential, determined by the positive function $\sigma\in \C(V)$ as $q_\sigma=-\sigma^{-1}\L(\sigma)$. Clearly, $\omega =||\sigma||^{-1}\sigma\in \Omega(V)$ and  $q_\sigma=q_\omega$. Thus,  $\L_{q_\sigma}=\L_{q_\omega}$.
In addition, ${\rm span}\{\sigma\}={\rm span}\{\omega\}$ and $\sigma^\bot=\omega^\bot$. For any $u\in \C(V)$, the projection of $u$ onto $\omega^\bot$ is given by $$\operatorname{proj}_{\omega^\bot}(u)=u-\langle u,\omega\rangle \omega=u-||\sigma||^{-2}\langle u,\sigma\rangle \sigma.$$ Thus, in order to express $G_\omega$ in terms of $\sigma$, we need to solve the Poisson equation
$\L_{q_\sigma}(u)=\varepsilon_y-||\sigma||^{-2}\sigma(y)\sigma$.
\end{rem}

When $\omega\in \Omega(V)$ is constant, we omit the dependence of $\omega$ and remove the corresponding subindex. Therefore, the Green operator and Green function of a given network are considered to be associated with the combinatorial Laplacian.

Next, we recall the  $M$-property definition, introduced in \cite{CEPS2026}.

\begin{definition}
Given $\omega\in \Omega(V)$, the network $\Gamma$ has the $M$-property with respect to $\omega$ if  $G_{\omega}(x,y)\le 0$ for any $x,y\in V$ with $x\not=y$. 
\end{definition}
If $\omega$ is constant and the network $\Gamma$ has the $M$-property with respect to $\omega$, then we say that the network $\Gamma$ has the $M$-property.

The {\it minimum principle}, which is satisfied by any positive semidefinite Schr\"odinger operator (see \cite[Proposition 4.4]{BCE05}), implies that it suffices to know the values of the Green function at adjacent vertices to conclude that the $M$-property holds, see \cite{KN98,KN2011} for the case of trees. Thus, we recall the following theorem, which was mentioned in \cite{CEPS2026}.
\begin{theorem}
The network $\Gamma$ has the $M$-property with respect to   $\omega \in \Omega(V)$ iff  $G_\omega(x,y)\le 0$ for any  $x\sim y$. 
\end{theorem}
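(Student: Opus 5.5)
One implication is immediate: if $\Gamma$ has the $M$-property with respect to $\omega$, then $G_\omega(x,y)\le 0$ for all $x\neq y$, in particular for adjacent ones. The content is the converse, and the plan is to fix $y\in V$ and study the single function $u=G_\omega(\cdot,y)$ on $V\setminus\{y\}$ through a minimum principle.

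By the characterization recalled above, $\L_{q_\omega}(u)=\varepsilon_y-\omega(y)\omega$. Hence, at every $x\neq y$, $\L_{q_\omega}(u)(x)=-\omega(y)\omega(x)<0$. Set $v=u/\omega$. Using $q_\omega=-\omega^{-1}\L(\omega)$ and \eqref{laplaciano}, a direct computation gives the Doob transform identity
$$\L_{q_\omega}(u)(x)=\omega(x)^{-1}\sum_{z\in V}c(x,z)\,\omega(x)\omega(z)\bigl(v(x)-v(z)\bigr).$$
Consequently, for every $x\neq y$,
$$\sum_{z}c(x,z)\omega(z)\bigl(v(x)-v(z)\bigr)=-\omega(y)\omega(x)<0 .$$
This says $v$ is strictly \emph{subharmonic} at $x$ for the weighted Laplacian with conductances $c(x,z)\omega(x)\omega(z)$. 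In particular, $v$ cannot attain a maximum over $V$ at such a point $x$ unless some neighbour has a strictly larger value. This is exactly the minimum principle of \cite[Proposition 4.4]{BCE05}, applied to $-u$; I will either quote it or use the displayed inequality directly.

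Now assume $G_\omega(x,y)\le0$ for every $x\sim y$, and suppose, for contradiction, that $\max_{x\neq y}v(x)>0$. Let $F=V\setminus\{y\}$ and choose $x_0\in F$ where $v$ attains its maximum over $F$. There are two cases.
\begin{enumerate}[(i)]
\item If $x_0\sim y$, then $v(x_0)=G_\omega(x_0,y)/\omega(x_0)\le 0$ by hypothesis, a contradiction.
\item If $x_0\not\sim y$, then every neighbour $z$ of $x_0$ lies in $F$, so $v(x_0)-v(z)\ge0$. The left-hand side of the inequality at $x_0$ is then $\ge0$, while the right-hand side is $<0$, again a contradiction.
\end{enumerate}
Hence $v\le0$ on $F$, so $G_\omega(x,y)=\omega(x)v(x)\le0$ for all $x\neq y$. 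Since $y$ was arbitrary, this is the $M$-property.

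The main obstacle is obtaining the right maximum principle. The naive form fails because $q_\omega$ takes negative values, so $\L_{q_\omega}$ itself has no comparison principle for $u$. The fix is to pass to $v=u/\omega$, where the potential disappears. Verifying the Doob identity is routine, and the strict inequality coming from $-\omega(y)\omega(x)<0$ is what makes case (ii) close without any connectivity argument.
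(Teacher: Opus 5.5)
Your argument is correct and takes essentially the paper's route. The paper gives no separate proof: it simply invokes the minimum principle for positive semidefinite Schr\"odinger operators. Your Doob-transform identity for $v=G_\omega(\cdot,y)/\omega$ is a self-contained proof of exactly the instance that is needed, namely that the maximum of $v$ over $V\setminus\{y\}$ can only be attained at a neighbour of $y$; case (ii) never uses $\max v>0$, so this holds unconditionally. The loosely worded remark about $v$ attaining a maximum over $V$ ``unless some neighbour has a strictly larger value'' plays no role, since you argue directly from the displayed identity.
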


Let $\LL_q$ be the matrix associated with the Schr\"odinger operator $\L_q$ with potential $q\in \C(V)$ on $\Gamma$. Then, $\LL_q$ is an irreducible, symmetric $Z$-matrix of order $n$ whose off diagonal entries are $-c(x_i,x_j)$, $i,j=1,\ldots,n,$ $i\not=j$, and whose diagonal entries are  given by $\kappa(x_i)+q(x_i)$, $i=1,\ldots,n$. 
With this identification, Proposition \ref{positividad} establishes that  $\LL_q$ is a singular, irreducible $M$-matrix iff $q=q_\omega$ for some weight $\omega\in \Omega(V)$. 

Consider the matrix $\GG_\omega$ whose $ij$  entry is given by $G_\omega(x_i,x_j)$. The identification between the Green function $G_\omega$ and the matrix $\GG_\omega$ shows that $\GG_\omega$ is an irreducible, singular, symmetric, and positive semidefinite matrix. Moreover, the identity \eqref{grupo} implies that  $\GG_\omega=\LL_{q_\omega}^\#$, the group inverse of  $\LL_{q_\omega}$.

Therefore, the network $\Gamma$ satisfies the $M$-property with respect to $\omega\in \Omega(V)$ iff  $\LL_{q_\omega}^\#$ is an $M$-matrix. In particular, $\Gamma$ satisfies the $M$-property iff the group inverse of the matrix associated with the combinatorial Laplacian is an $M$-matrix.

The problem of determining irreducible singular $M$-matrices, whose group inverses are also $M$-matrices, has been investigated only in specific cases, despite its relevance in many applications. Existing results are largely confined to the combinatorial Laplacian, which corresponds to a singular, symmetric, diagonally dominant $M$-matrix. Consequently, the non-diagonally dominant case remains largely unexplored. 

For instance, Chen {\it et al.} obtained in \cite{CKN95} that a {\it weighted path}, viz., a network whose underlying graph is a path, satisfies the $M$-property only when the number of vertices is less than $4$. Furthermore, for paths with length $3$ or $4$, the $M$-property holds only under severe restrictions on the conductances. More generally, Kirkland {\it et al.} proved in \cite{KN98} that the only weighted trees, viz., networks whose underlying graph is a tree, satisfying the $M$-property are  paths of length less than $4$ and stars whose conductances satisfy appropriate constraints. In this case, they concluded that for any size, there exist infinitely many star networks satisfying the $M$-property.

The introduction of {\it Potential Theory} into this problem, and more specifically the study of singular, positive semidefinite Schr\"odinger operators on networks, allows us to treat the problem in its full generality, albeit restricted to the symmetric case. Recently, in \cite{CEPS2026}, the authors studied the class of conductances for which the network associated with a star graph has the $M$-property with respect to a weight. In addition, in \cite{BCEM2012}, it was proven that for any  nonnegative integer $n$, there exist infinitely many weights and infinitely many conductances such that the corresponding path with these conductances has the $M$-property with respect to these weights. From the matrix point of view, this means that unlike the non-existence of singular, irreducible, symmetric, tridiagonal, and diagonally dominant $M$-matrices of order greater than $4$, if we eliminate the diagonal dominance hypothesis, then there exist infinitely many families of such matrices for any fixed order. However, a complete description of all matrices having this property remains an open problem. As an indication of its difficulty, it suffices to observe that for $n=4$, $10$ infinite families of such matrices were explicitly determined in \cite{CEM2013}; however, they do not cover all the possibilities. Even in the case of the combinatorial Laplacian of graphs, the problem remains unsolved. For example, for symmetric structures such as distance-regular  or distance-biregular graphs, the problem has been solved,  see \cite{ACEJ23,BCEM2012*,KP2013}, but the general case remains open.

\section{Recoverable complete networks and the $M$-property}\label{recoverable complete}

The so-called neighborhood transformation, applied at the center of a star network with 
$n+1$ vertices, yields an electrically equivalent complete network on 
$n$ vertices. To proceed, we recall the definition of neighborhood transformation  introduced in \cite{CEM2019}.
Since a finite network is completely determined by its vertex set and the associated conductance function, we may consider a network as a pair consisting of these two objects. 

Let $\Tilde{\Gamma}=(\Tilde{V},\Tilde{c})$ be a given network. We fix a vertex $x_0\in \Tilde{V}$ and set $F=\Tilde{V}\setminus \{x_0\}$.
Define a new conductance $c^{x_0}:F\times F\to [0,\infty)$ by $c^{x_0}(x,x)=0,$ $x\in F$ and
$$c^{x_0}(x,y)=\Tilde{c}(x,y)+\frac{\Tilde{c}(x,x_0)\Tilde{c}(x_0,y)}{\kappa(x_0)},\hspace{0.15cm} x,y\in F, \; x\ne y,$$
where $\kappa(x_0)=\sum\limits_{z\in \Tilde{V}} \Tilde{c}(x_0,z)$ is the degree at $x_0$.

Thus, we obtain a new network $\Gamma^{x_0}=(F,c^{x_0})$ from the given network $\Tilde{\Gamma}$. The adjacency relation in $\Gamma^{x_0}$ is defined by 
$$x,y\in F,\;  x\sim y \;\text{iff}\;  c^{x_0}(x,y)>0.$$ This transformation is called {\it neighborhood transformation} at the vertex $x_0.$  Note that, the connectedness of the new network is still maintained after the neighborhood transformation at $x_0.$ 

Applying the neighborhood transformation at the center $x_0$ of a star subnetwork of $\tilde{\Gamma}$ yields a network $\Gamma^{x_0}$ in which the subnetwork induced on $N(x_0)$ (the set of vertices adjacent to $x_0$) is complete. We refer to this procedure as the {\it star-complete transformation}. In the case $|N(x_0)| = 3$, it reduces to the classical $Y\!-\!\Delta$ transformation.

We next recall \cite[Proposition 8.1]{CEM2019}, which motivates and formalizes the notion of a recoverable complete network. This result shows that a complete network can be obtained from a star network via the neighborhood transformation. We state the proposition below and then introduce the corresponding definition.

\begin{propo}\label{recov}
Let $\Tilde{ \Gamma}=(\Tilde{V},\Tilde{ c})$ be a network. Let $K$ be the vertex set of a complete graph with $K\subseteq\Tilde{V}$.
     Then,  the complete network $\Gamma=(K,\Tilde{c})$ is obtained from a star network by a star-complete transformation at the center, iff there exists a nonnegative  function $\alpha\in \C(\Tilde{V})$ with $\text{supp}(\alpha)=K$ and $\Tilde{ c}(x,y)=\alpha(x)\alpha(y)$, $x,y\in K, \; x\ne y.$
\end{propo}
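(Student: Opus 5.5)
The plan is to prove the two implications separately, working directly from the definition of the neighborhood transformation at the center $x_0$ of a star. Throughout, I read the statement as saying that $\Gamma=(K,\tilde c)$ coincides with the network $\Gamma^{x_0}$ obtained from a star network $\Sigma$ with center $x_0$ and leaf set $K$. Since $\Sigma$ is a star, the conductance $c_\Sigma$ vanishes between any two distinct leaves, and $x_0$ is joined only to the leaves.

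For the direct implication, suppose $\Gamma=\Gamma^{x_0}$. Because leaves of $\Sigma$ are non-adjacent, the defining formula reduces, for $x,y\in K$ with $x\ne y$, to
$$\tilde c(x,y)=c^{x_0}(x,y)=\frac{c_\Sigma(x,x_0)\,c_\Sigma(x_0,y)}{\kappa(x_0)},\qquad \kappa(x_0)=\sum_{z\in K}c_\Sigma(x_0,z)>0.$$
I would therefore define $\alpha(x)=c_\Sigma(x,x_0)\,\kappa(x_0)^{-1/2}$ for $x\in K$ and $\alpha=0$ on $\tilde V\setminus K$. This $\alpha$ is nonnegative, and it satisfies $\tilde c(x,y)=\alpha(x)\alpha(y)$ for distinct $x,y\in K$. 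Its support is exactly $K$, because every leaf of a star is adjacent to the center, so $c_\Sigma(x,x_0)>0$ for each $x\in K$.

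For the converse, suppose $\alpha\ge 0$ with $\operatorname{supp}(\alpha)=K$ and $\tilde c(x,y)=\alpha(x)\alpha(y)$ on $K$. The task is to build a star with center $x_0\notin K$ and conductances $c_\Sigma(x,x_0)=\beta(x)>0$ for which the transformation reproduces $\tilde c$. I would try $\beta=t\alpha$ with a scalar $t>0$ to be chosen. The transformation then gives
$$c^{x_0}(x,y)=\frac{t^2\alpha(x)\alpha(y)}{t\sum_{z\in K}\alpha(z)}=\frac{t}{\sum_{z\in K}\alpha(z)}\,\alpha(x)\alpha(y).$$
Choosing $t=\sum_{z\in K}\alpha(z)$ makes this equal to $\alpha(x)\alpha(y)=\tilde c(x,y)$. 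The resulting star is a genuine connected network, since $\beta>0$ on $K$, and the new vertex $x_0$ can be taken outside $K$ (or identified with a vertex of $\tilde V\setminus K$ if one exists).

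The only subtle point is interpretive rather than computational. It concerns what "obtained from a star network" means when $\tilde\Gamma$ is an ambient network. I would make explicit that the star lives on $K\cup\{x_0\}$ and that only the restriction of $\tilde c$ to $K$ matters. Once that is fixed, both directions are the one-line computations above, and the key observation is simply the scaling normalization $t=\sum_{z\in K}\alpha(z)$.
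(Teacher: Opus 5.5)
Your proof is correct. The paper gives no proof of its own, since the statement is recalled from \cite[Proposition 8.1]{CEM2019}. Your argument is the natural direct verification from the definition of the neighborhood transformation. Leaves of a star are non-adjacent, so $c^{x_0}(x,y)=c(x,x_0)c(x_0,y)/\kappa(x_0)$, which gives $\alpha=c(\cdot,x_0)\,\kappa(x_0)^{-1/2}$. Conversely, the star with $c(x_0,x)=\alpha(x)\sum_{z\in K}\alpha(z)$ has $\kappa(x_0)=\big(\sum_{z\in K}\alpha(z)\big)^2$ and reproduces $\alpha(x)\alpha(y)$.
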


For the remainder of this section, let $\Gamma=(V,\widehat c)$ be a complete network where $V=\{x_1,\ldots,x_n\}$ is the vertex set and $\widehat c:V\times V \to [0,\infty)$ is the conductance function, respectively.

\begin{definition}
The complete network $\Gamma=(V,\widehat c)$ is recoverable iff there exists a positive function $c\in \C(V)$ such that $\widehat c(x,y)=c(x)c(y)$ for any $x,y\in V$, $x\not=y$.   
\end{definition}

If $|V|=3$, then the complete network $\Gamma$ is recoverable for any conductance $\widehat c$.
However, when $|V|\ge 4$, there exist conductances  for which $\Gamma$ is not recoverable. In fact, it was shown in \cite[Proposition 8.5]{CEM2019} that for $|V|\ge 4$, the complete network $\Gamma=(V,\widehat c)$ is recoverable if and only if, for any four distinct vertices $x,y,u,v\in V$, it is satisfied that $\widehat c(x,y)\widehat c(u,v)=\widehat c(x,u)\widehat c(v,y)$.
Moreover, any recoverable complete network with more than two vertices is uniquely recoverable. In contrast, a digon (the complete network on two vertices) is infinitely recoverable; see \cite[Corollary 8.2]{CEM2019}.

Throughout this section, we consider a positive function $\omega \in \mathcal{C}(V)$, not necessarily normalized, and the associated singular positive semidefinite Schr\"odinger operator $\mathcal{L}_{q_\omega}$ on $\Gamma$.

The following notations are used in this section.
We denote $c_j=c(x_j), w_j=\omega(x_j)$; $j=1,\ldots,n$. Let $\c=(c_1,\ldots,c_n)^\top\in \RR^n$ and $ \w=(w_1,\ldots,w_n)^\top \in \RR^n$. 
The notation $u>0$ indicates $u\in \RR^n$ as an entrywise positive vector. 

Thus, the conductance on the edge  $\{x_i,x_j\}$ is given by $c_ic_j$ for $i,j=1,\ldots,n,\; i\ne j$. Note that the positive vector $\w$ is the eigenvector corresponding to the eigenvalue $0$ of the matrix associated with the Schr\"odinger operator $\L_{q_\omega}$ on $\Gamma$. 

Then, all singular, symmetric and irreducible $M$-matrices supported by a recoverable complete network $\Gamma$ on $n$ vertices are given by the matrix 
\begin{equation}
\label{Schr2}
\widehat \LL(\c,\w)=\begin{bmatrix}  \frac{\alpha c_1}{w_1}-c_1^2 & -c_1c_2 & \cdots & -c_1c_n\\
-c_1c_2  & \frac{\alpha c_2}{w_2}-c_2^2 & \cdots &-c_2c_n \\
\vdots &\vdots & \ddots & \vdots\\
-c_nc_1  & -c_nc_2 & \cdots & \frac{\alpha c_n}{w_n} -c_n^2
\end{bmatrix}=\DD-\sf cc^\top,
\end{equation} 
where $\alpha=\c^\top \w$ and $\DD=\operatorname{diag}( \frac{\alpha c_1}{w_1} ,\cdots, \frac{\alpha c_n}{w_n})>0 $. 
Note that $\widehat \LL(\c,\e)$ is the matrix associated with the combinatorial Laplacian of the recoverable complete network $\Gamma$.

\begin{propo}
    Let $\sf A\in \M_3(\RR)$ be a symmetric $M$-matrix, whose off-diagonal entries are negative. Then, $\sf A$ can be written as a rank-one perturbation of a positive diagonal matrix.
\end{propo}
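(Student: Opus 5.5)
Write $\AA=(a_{ij})$ with $a_{ij}<0$ for $i\neq j$. The plan is to find a positive vector $\c$ and a diagonal matrix $\DD$ with $\AA=\DD-\c\c^\top$, and then to show that $\DD$ has positive diagonal. The off-diagonal entries force $c_ic_j=-a_{ij}>0$ for $1\le i<j\le 3$. This is the $3$-vertex case of recoverability noted earlier, where every complete network on three vertices is recoverable.

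Solving the three equations gives the explicit choice
$$c_1=\sqrt{\frac{a_{12}a_{13}}{-a_{23}}},\qquad c_2=\sqrt{\frac{a_{12}a_{23}}{-a_{13}}},\qquad c_3=\sqrt{\frac{a_{13}a_{23}}{-a_{12}}}.$$
Each radicand is positive because the off-diagonal entries are negative. A direct check gives $c_1c_2=\sqrt{a_{12}^2}=-a_{12}$, and similarly for the other two products. We then set $\DD=\operatorname{diag}(d_1,d_2,d_3)$ with $d_i=a_{ii}+c_i^2$, so that $\AA=\DD-\c\c^\top$ holds entrywise.

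The main step is showing $d_i>0$ for every $i$, and this is where the $M$-matrix hypothesis enters. Since $\AA$ is a symmetric $M$-matrix, it is positive semidefinite. Its $2\times 2$ principal minors are therefore nonnegative: $a_{ii}a_{jj}\ge a_{ij}^2>0$. Hence every diagonal entry $a_{ii}$ is positive, because $a_{ii}=0$ would make that minor equal to $-a_{ij}^2<0$. It follows that $d_i=a_{ii}+c_i^2>0$.

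No appeal to the full $3\times 3$ determinant is needed, since positivity of $\DD$ only uses $a_{ii}>0$. The identity then exhibits $\AA$ as the rank-one perturbation $\DD-\c\c^\top$ of the positive diagonal matrix $\DD$. Once the explicit formula for $\c$ is written down, the only point needing care is the positivity of $\DD$, and the minor argument above settles it.
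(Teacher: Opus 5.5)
Your proof is correct and follows the paper's argument essentially verbatim. You use the same explicit formula for $\c$ (your radicands coincide with the paper's $(-a_{12})(-a_{13})/(-a_{23})$, etc.), the same $\DD=\operatorname{diag}(a_{ii}+c_i^2)$, and the same positivity of the diagonal; the only difference is that you justify $a_{ii}>0$ via the $2\times 2$ principal minors of the positive semidefinite matrix, whereas the paper simply asserts it from the $M$-matrix property, and in fact $a_{ii}\ge 0$ alone would already suffice since $c_i^2>0$.
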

\proof Let $\AA=(a_{ij})\in \M_3(\RR)$ be a symmetric $M$-matrix with negative off-diagonal entries so that, $a_{12}, a_{23},a_{13}<0$.
Define \[
c_1=\sqrt{\frac{(-a_{12})(-a_{13})}{-a_{23}}},\quad
c_2=\sqrt{\frac{(-a_{12})(-a_{23})}{-a_{13}}},\quad
c_3=\sqrt{\frac{(-a_{13})(-a_{23})}{-a_{12}}}.
\] 
Clearly, each $c_i$ is well-defined and positive, and the vector $\c=(c_1,c_2,c_3)^\top\in \RR^3$ satisfies $a_{ij}=-c_ic_j$, $i,j=1,2,3,$ and $i\ne j.$
Moreover, $\c$ is unique up to multiplication by $\pm 1$.
It follows that, $$\sf A=D_1-cc^\top,$$ where ${\sf D_1}=\operatorname{diag}(d_1,d_2,d_3)$ with $d_i=a_{ii}+c_i^2.$ 
Since all the entries of $\AA$ are nonzero, it is irreducible. Furthermore, as $\sf A$ is an $M$-matrix, $a_{ii}>0$ for $i=1,2,3$ and so, $d_i>0$, for each $i$.

Moreover, by the matrix determinant lemma, $\AA$ is singular if and only if $\sf \c^\top D_1^{-1}\c=1$ (which is equivalent to $\sum\limits_{i=1}^3\frac{c_i^2}{c_i^2+a_{ii}}=1$). In this case, setting $\sf x=D_1^{-1}c>0$, we obtain $$\sf Ax=(D_1-cc^\top)D_1^{-1}c=c-c(c^\top D_1^{-1}c)=0.$$
\qed

It is trivial to observe that $\widehat\LL(\sf c,w)$ is a rank one perturbation of a positive diagonal matrix. In Lemma \ref{groupinv}, we derive a closed-form representation of the group inverse of $\sf D-cc^\top$. To proceed, we recall the following lemma, where we modify the original result for the symmetric case so that the Moore-Penrose inverse coincides with the group inverse. An important fact is that the group inverse of a matrix $\AA$ exists if and only if $\operatorname{rank}(\AA)=\operatorname{rank}(\AA^2)$. Therefore, for a symmetric matrix $\AA$, the group inverse $\AA^\#$ exists; see \cite{BIG03}.

\begin{lemma}\cite[Theorem 2.1]{BT2003} \label{bakres}
For a symmetric matrix $\sf A\in \M_n(\RR)$ and $\sf \b \in \mathbb{R}^n$, we set $\MM:=\sf A-bb^\top.$ Let $\sf f:=A^{\#}b$ be a nonzero vector and $\sf \delta:={\parallel f\parallel}^2$.
Then
$$\sf M^{\#}=A^{\#}-\frac{1}{\delta}ff^\top A^{\#}-\frac{1}{\delta}A^{\#}ff^\top+\frac{f^\top A^{\#}f}{\delta^2}ff^\top.$$
\end{lemma}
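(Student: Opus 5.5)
The formula in Lemma \ref{bakres} can be rewritten as
$$\MM^\#={\sf P}\AA^\#{\sf P},\qquad {\sf P}:={\sf I}-\tfrac{1}{\delta}{\sf ff}^\top ,$$
where ${\sf P}$ is the orthogonal projector onto ${\sf f}^\bot$. Expanding ${\sf P}\AA^\#{\sf P}$ gives exactly the four terms in the statement. So the plan is to verify that ${\sf X}:={\sf P}\AA^\#{\sf P}$ satisfies the defining equations of the group inverse of the symmetric matrix $\MM$.

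As stated, the identity needs the two hypotheses of the original theorem in \cite{BT2003}: ${\sf b}\in R(\AA)$ and ${\sf b}^\top\AA^\#{\sf b}=1$. Without them the formula fails; for example, if $\AA$ is nonsingular and ${\sf b}^\top \AA^{-1}{\sf b}\neq 1$, then $\MM$ is invertible but ${\sf X}$ is singular. These hypotheses hold in our application, since $\widehat\LL(\c,\w)$ is singular. I therefore assume them throughout, and use the standard facts that for symmetric $\AA$ one has $\AA^\#=\AA^\dagger$, that $\AA\AA^\#$ is the orthogonal projector onto $R(\AA)$, and that $R(\AA^\#)=R(\AA)$.

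First I compute the null space of $\MM$.
\begin{itemize}
\item Since ${\sf b}\in R(\AA)$, we have $\AA{\sf f}=\AA\AA^\#{\sf b}={\sf b}$. Hence $\MM{\sf f}={\sf b}-{\sf b}({\sf b}^\top\AA^\#{\sf b})=0$.
\item If ${\sf n}\in N(\AA)$, then ${\sf b}^\top{\sf n}=0$ because ${\sf b}\in R(\AA)=N(\AA)^\bot$. Hence $\MM{\sf n}=0$.
\item Conversely, let $\MM{\sf x}=0$. Then $\AA{\sf x}=({\sf b}^\top{\sf x}){\sf b}=\AA\big(({\sf b}^\top{\sf x}){\sf f}\big)$. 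Therefore ${\sf x}-({\sf b}^\top{\sf x}){\sf f}\in N(\AA)$.
\end{itemize}
This gives $N(\MM)=N(\AA)\oplus{\rm span}\{{\sf f}\}$, and the sum is orthogonal because ${\sf f}\in R(\AA^\#)=R(\AA)$. Consequently the orthogonal projector onto $R(\MM)=N(\MM)^\bot$ is ${\sf Q}=\AA\AA^\#-\frac1\delta{\sf ff}^\top$.

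For a symmetric matrix, the group inverse is the unique symmetric ${\sf X}$ satisfying $\MM{\sf X}={\sf X}\MM={\sf Q}$ and ${\sf X}N(\MM)=0$. I check these conditions for ${\sf X}={\sf P}\AA^\#{\sf P}$ in turn.
\begin{itemize}
\item ${\sf X}$ is symmetric by construction.
\item ${\sf X}{\sf f}=0$ because ${\sf P}{\sf f}=0$.
\item For ${\sf n}\in N(\AA)$, we have ${\sf P}{\sf n}={\sf n}$ (since ${\sf n}\perp{\sf f}$) and $\AA^\#{\sf n}=0$, so ${\sf X}{\sf n}=0$.
\item Since $\MM{\sf f}=0$, we get $\MM{\sf P}=\MM$. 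Hence $\MM{\sf X}=\MM\AA^\#{\sf P}=(\AA\AA^\#-{\sf b}{\sf f}^\top){\sf P}$.
\item In this product, ${\sf f}^\top{\sf P}=0$, and $\AA\AA^\#{\sf P}=\AA\AA^\#-\frac1\delta{\sf ff}^\top$ because $\AA\AA^\#{\sf f}={\sf f}$. So $\MM{\sf X}={\sf Q}$.
\item Taking transposes, and using that $\MM$, ${\sf X}$ and ${\sf Q}$ are symmetric, gives ${\sf X}\MM={\sf Q}$.
\end{itemize}
Finally, ${\sf X}\MM{\sf X}={\sf Q}{\sf X}={\sf X}$, since ${\sf X}$ vanishes on $N(\MM)$ and so ${\sf X}{\sf Q}={\sf X}$. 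Together with $\MM{\sf X}\MM={\sf Q}\MM=\MM$ and $\MM{\sf X}={\sf X}\MM$, this shows ${\sf X}=\MM^\#$.

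The computations are routine. The only real obstacle is identifying the implicit hypotheses ${\sf b}\in R(\AA)$ and ${\sf b}^\top\AA^\#{\sf b}=1$, which the lemma as stated omits, and describing $N(\MM)$ correctly. Once $N(\MM)$ is known to be $N(\AA)\oplus{\rm span}\{{\sf f}\}$, everything else follows.
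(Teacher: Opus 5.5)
Your proof is correct. The paper gives no proof of this lemma; it quotes Theorem 2.1 of the cited reference, specialized to symmetric $\AA$ so that $\AA^\#=\AA^\dagger$. That formula is the classical one for a rank-one modification in the case ${\sf b}\in R(\AA)$, ${\sf b}^\top\AA^\dagger{\sf b}=1$.

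You are right that the statement as printed omits exactly these hypotheses and is false without them. Your nonsingular counterexample is valid. The range condition cannot be dropped either: for $\AA={\rm diag}(1,0)$ and ${\sf b}=(1,1)^\top$ one has ${\sf b}^\top\AA^\#{\sf b}=1$, yet $\MM$ is invertible while ${\sf P}\AA^\#{\sf P}=0$. In the only use of the lemma, Lemma~\ref{groupinv}, both hypotheses hold, since $\DD$ is invertible and ${\sf c}^\top\DD^{-1}{\sf c}=\alpha^{-1}{\sf c}^\top{\sf w}=1$.

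Compared with the citation, your direct verification is self-contained and makes the needed hypotheses explicit. It also gives the compact form $\MM^\#={\sf P}\AA^\#{\sf P}$, with ${\sf P}$ the orthogonal projector onto ${\sf f}^\perp$. In the paper's setting $N(\DD)=0$ and ${\sf f}=\alpha^{-1}{\sf w}$, so your argument collapses to $N(\MM)={\rm span}\{{\sf w}\}$ and ${\sf Q}={\sf P}$. This gives $(\DD-{\sf cc}^\top)^\#={\sf P}\DD^{-1}{\sf P}$ with ${\sf P}={\sf I}-\|{\sf w}\|^{-2}{\sf ww}^\top$, from which Lemma~\ref{groupinv} can be read off immediately.

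Two cosmetic points. First, you write ${\sf XMX}={\sf QX}$ but justify it via ${\sf XQ}={\sf X}$; either group it as ${\sf X}(\MM{\sf X})={\sf XQ}$ or invoke symmetry of ${\sf X}$ and ${\sf Q}$. Second, the uniqueness characterization you state is never actually needed, since you go on to check the three defining equations of the group inverse directly.
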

\begin{lemma}\label{groupinv}
        The entries of the group inverse of the matrix $\DD-\sf cc^\top$ or the Green function $G_\omega(x_i,x_j)$ are given by
    \begin{equation*}
        (\DD-\c\c^\top)^\#_{ij}=G_\omega(x_i,x_j)=
        \begin{cases}
            \dfrac{w_iw_j}{\alpha ||\w||^2}\left(\dfrac{1}{||\w||^2}\sum\limits_{k=1}^n\displaystyle\dfrac{w_k^3}{c_k}-\dfrac{w_i}{c_i}-\dfrac{w_j}{c_j}\right), & i\neq j\\
            \displaystyle\dfrac{w_i}{\alpha c_i}+\dfrac{w_i^2}{\alpha ||\w||^2}\left(\dfrac{1}{||\w||^2}\sum\limits_{k=1}^n\dfrac{w_k^3}{c_k}-\dfrac{2w_i}{c_i}\right), & i= j.
            \end{cases}
    \end{equation*}
\end{lemma}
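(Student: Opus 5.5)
Apply Lemma \ref{bakres} with $\AA=\DD$ and $\b=\c$. Since $\DD$ is a positive diagonal matrix, it is invertible and $\DD^\#=\DD^{-1}=\operatorname{diag}\big(\frac{w_1}{\alpha c_1},\dots,\frac{w_n}{\alpha c_n}\big)$. The vector in the lemma is then
\[
{\sf f}=\DD^{-1}\c=\tfrac{1}{\alpha}\w ,
\]
which is nonzero. This is consistent with the fact that $\w$ spans the kernel of $\DD-\c\c^\top$; indeed $\c^\top\DD^{-1}\c=\frac{1}{\alpha}\c^\top\w=1$. Hence
\[
\delta=\|{\sf f}\|^2=\frac{\|\w\|^2}{\alpha^2}.
\]

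Next we compute the three correction terms of Lemma \ref{bakres}. First,
\[
{\sf f}{\sf f}^\top\DD^{-1}=\frac{1}{\alpha^3}\,\w\w^\top\DD^{-1}\quad\text{has entries}\quad \frac{1}{\alpha^3}\,w_iw_j\,\frac{w_j}{\alpha c_j}.
\]
Dividing by $\delta$ gives $\dfrac{w_iw_j}{\alpha\|\w\|^2}\cdot\dfrac{w_j}{c_j}$. The symmetric term $\DD^{-1}{\sf f}{\sf f}^\top/\delta$ likewise has entries $\dfrac{w_iw_j}{\alpha\|\w\|^2}\cdot\dfrac{w_i}{c_i}$. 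Finally,
\[
{\sf f}^\top\DD^{-1}{\sf f}=\frac{1}{\alpha^3}\sum_k\frac{w_k^3}{c_k},
\]
so the last term is
\[
\frac{{\sf f}^\top\DD^{-1}{\sf f}}{\delta^2}\,{\sf f}{\sf f}^\top\quad\text{with entries}\quad \frac{\alpha^4}{\|\w\|^4}\cdot\frac{1}{\alpha^3}\sum_k\frac{w_k^3}{c_k}\cdot\frac{w_iw_j}{\alpha^2}=\frac{w_iw_j}{\alpha\|\w\|^4}\sum_k\frac{w_k^3}{c_k}.
\]

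Adding the pieces gives $(\DD-\c\c^\top)^\#_{ij}$. The diagonal term $\frac{w_i}{\alpha c_i}$ is present only when $i=j$. When $i=j$, the two middle terms combine into $-\frac{2w_i}{c_i}$. This yields exactly the two cases of the stated formula.

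For the identification with $G_\omega$, we use that $\widehat\LL(\c,\w)=\DD-\c\c^\top$ is the matrix of $\L_{q_\omega}$, with kernel spanned by $\w$. By the identity \eqref{grupo} and the discussion after it, $\GG_\omega=\LL_{q_\omega}^\#$. Strictly, that discussion assumes $\omega$ is normalized; by the Remark, $\L_{q_\sigma}=\L_{q_\omega}$ for $\omega=\|\sigma\|^{-1}\sigma$, so the group inverse does not depend on the scaling of $\w$. One can also check this directly: the formula is invariant under $\w\mapsto t\w$, since $\alpha$ scales by $t$ and $\|\w\|^2$ by $t^2$.

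The main obstacle is only the bookkeeping of the powers of $\alpha$ and $\|\w\|$. As a safeguard, I would verify that the resulting matrix is symmetric and annihilates $\w$. The row sums weighted by $w_j$ vanish: the $w_j$-weighted sum of the off-diagonal-type expression gives
\[
\frac{w_i}{\alpha\|\w\|^2}\Big(\sum_k\frac{w_k^3}{c_k}-\frac{w_i}{c_i}\|\w\|^2-\sum_j\frac{w_j^3}{c_j}\Big)=-\frac{w_i^2}{\alpha c_i},
\]
which cancels the diagonal term's contribution $\frac{w_i}{\alpha c_i}\,w_i$. This confirms the formula.
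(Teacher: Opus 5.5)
Your proposal is correct and is essentially the paper's proof: apply Lemma \ref{bakres} with $\AA=\DD$, $\b=\c$, so that ${\sf f}=\DD^{-1}\c=\w/\alpha$ and $\delta=\|\w\|^2/\alpha^2$, then expand the four terms entrywise. One small slip: ${\sf f}{\sf f}^\top\DD^{-1}=\frac{1}{\alpha^2}\w\w^\top\DD^{-1}$, not $\frac{1}{\alpha^3}\w\w^\top\DD^{-1}$; your entry after dividing by $\delta$ is nevertheless correct, so nothing downstream is affected.
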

\proof
By Lemma \ref{bakres}, we get $${\sf (\DD- cc^\top)^\#=\DD^{-1}-\frac{ ww^\top \DD^{-1}}{||\w||^2}-\frac{\DD^{-1} ww^\top}{||\w||^2}}+
\frac{\sf w^\top D^{-1}w}{ ||\w||^4}\w \w ^\top.$$
Using the fact that $ \DD^{-1}\w=\left[\frac{w_1^2}{\alpha c_1}, \ldots,\frac{w_n^2}{\alpha c_n}\right]^\top$, the result is immediate.\qed

We remark that the above result was obtained in  \cite[Theorem 8.6]{CEM2019} using discrete potential theory tools and using the electrical  equivalence between a star network and a recoverable complete network. We have performed the proof here using purely algebraic tools as an alternative. 

We are now ready to determine when the group inverse of $\widehat \LL(\c,\w)$ is an $M$-matrix.
When $n=2$, $\widehat \LL(\c,\w)^\# $ is an $M$-matrix for any $\c,\w>0$; see Theorem \ref{complete:inverse}. However, for $n\ge 3$, there exist complete recoverable networks that do not have the $M$-property; that is, for such networks, $\widehat \LL(\c,\w)^\# $ fails to be an $M$-matrix. The following example illustrates this fact.
\begin{ex}
   Let $\w=(1,2,1)^\top$ and ${\sf c}=(2,1,2)^\top$. Then, the matrix associated with $\L_{q_{\omega}}$ on a  finite recoverable complete network: $$\widehat \LL(\c,\w)=\begin{bmatrix}
       8 & -2 & -4\\
       -2 & 2 & -2 \\
       -4 & -2 & 8
   \end{bmatrix}.$$
   But, the group inverse $\widehat \LL(\c,\w)^\#=\frac{1}{72}\begin{bmatrix}
       7 & -4 & 1\\
       -4 & 4 & -4\\
       1 & -4 & 7
   \end{bmatrix}$ has a positive off-diagonal entry, and hence $\widehat \LL(\c,\w)^{\#}$ is not a $Z$-matrix.
\end{ex}
The following theorem establishes the necessary and sufficient condition under which $\widehat\LL(\sf c,w)^\#$ qualifies as an $M$-matrix. 
\begin{theorem}
  \label{complete:inverse}
Given $\c,\w>0$, the matrix $\widehat \LL(\c,\w)^\# $ is an $M$-matrix iff
$$\sum\limits_{k=1}^n\dfrac{w_k^3}{c_k}\le||\w||^2\Big( \dfrac{w_i}{c_i}+\dfrac{w_j}{c_j}\Big), \hspace{.25cm}\hbox{for any \,\,  $i,j=1,\ldots,n,\,\,i\not=j$}.$$
In particular,  when $n=2$ then $\widehat \LL(\c,\w)^\# $ is an $M$-matrix for any $\c,\w>0$.
\end{theorem}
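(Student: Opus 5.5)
We argue directly from the explicit formula for the group inverse in Lemma \ref{groupinv}. The group inverse $\widehat \LL(\c,\w)^\#=\GG_\omega$ is symmetric and positive semidefinite, because $\widehat\LL(\c,\w)$ is symmetric positive semidefinite and its group inverse coincides with its Moore--Penrose inverse. Recall the fact quoted in the Introduction: a symmetric $Z$-matrix is an $M$-matrix iff it is positive semidefinite. Hence $\widehat \LL(\c,\w)^\#$ is an $M$-matrix iff it is a $Z$-matrix, that is, iff all of its off-diagonal entries are nonpositive. Equivalently, $\Gamma$ has the $M$-property with respect to $\omega$.

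Next we read off the sign of each off-diagonal entry. By Lemma \ref{groupinv}, for $i\neq j$,
$$G_\omega(x_i,x_j)=\frac{w_iw_j}{\alpha\|\w\|^2}\Big(\frac{1}{\|\w\|^2}\sum_{k=1}^n\frac{w_k^3}{c_k}-\frac{w_i}{c_i}-\frac{w_j}{c_j}\Big).$$
The prefactor $\frac{w_iw_j}{\alpha\|\w\|^2}$ is strictly positive, since $\c,\w>0$ gives $\alpha=\c^\top\w>0$. Therefore $G_\omega(x_i,x_j)\le 0$ iff the bracket is nonpositive. Multiplying the bracket by $\|\w\|^2>0$, this is exactly
$$\sum_{k}\frac{w_k^3}{c_k}\le \|\w\|^2\Big(\frac{w_i}{c_i}+\frac{w_j}{c_j}\Big).$$
Requiring this for all pairs $i\ne j$ gives the stated characterization. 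Only the off-diagonal formula of Lemma \ref{groupinv} is needed here.

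For $n=2$, write $\|\w\|^2=w_1^2+w_2^2$ and expand the right-hand side:
$$(w_1^2+w_2^2)\Big(\frac{w_1}{c_1}+\frac{w_2}{c_2}\Big)=\frac{w_1^3}{c_1}+\frac{w_2^3}{c_2}+\frac{w_2^2w_1}{c_1}+\frac{w_1^2w_2}{c_2}.$$
This exceeds $\frac{w_1^3}{c_1}+\frac{w_2^3}{c_2}$ by a positive amount, so the condition holds strictly for every $\c,\w>0$.

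The only delicate point is the first step, namely that nonpositivity of the off-diagonal entries suffices. It relies on the semidefiniteness of $\GG_\omega$ established earlier, together with the symmetric $Z$-matrix criterion. We verify that the zero eigenvalue causes no issue: the characterization allows singular $M$-matrices, since $s\ge\rho(\sf B)$ permits equality.
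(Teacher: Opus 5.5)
Your proof is correct and follows the paper's argument. Positive semidefiniteness of $\widehat\LL(\c,\w)^\#$, together with the symmetric $Z$-matrix criterion, reduces the $M$-matrix question to the sign of the off-diagonal entries, which you then read off from Lemma~\ref{groupinv} after discarding the positive prefactor $\frac{w_iw_j}{\alpha\|\w\|^2}$; your explicit expansion for $n=2$ also checks a case the paper states without verifying.
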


\proof  
As the matrix $\widehat \LL(\c,\w)$ is positive semidefinite, so is its group inverse. Therefore, we require $\widehat \LL(\c,\w)^\#$ to be an $M$-matrix precisely when all its off-diagonal entries are nonpositive. By Lemma \ref{groupinv}, $\widehat \LL(\c,\w)^\#$ is an $Z$-matrix iff the stated inequalities follow. Hence the result.\qed

\vspace{.2cm}

From the above characterization, we easily obtain singular, positive semidefinite Schr\"odinger operators on a finite recoverable complete network that satisfy the $M$-property with respect to $\omega$.

\begin{rem}
For $\c,\w>0$, $\widehat\LL(\c,\w)^\#$ is an $M$-matrix iff $\widehat\LL(t\c,s\w)^\#$ is an $M$-matrix for any $t,s>0.$
Also, note that $\widehat\LL(\c,s\w)=\widehat\LL(\c,\w)$ for any $s>0.$
\end{rem}
\begin{corollary}\label{posmultiple}
    If $\c$ and $\w$ are positive multiples of each other, then $\widehat \LL(\c,\w)^\#$ is an $M$-matrix.
\end{corollary}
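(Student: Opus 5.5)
By the preceding Remark, $\widehat\LL(\c,s\w)=\widehat\LL(\c,\w)$ for every $s>0$, so we may assume $\w=\c$. The plan is then to check the inequality of Theorem \ref{complete:inverse} directly for $\w=\c$.

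With $w_k=c_k$ for all $k$, we get $w_k^3/c_k=c_k^2$ and $w_i/c_i=1$. The left-hand side becomes $\sum_{k=1}^n c_k^2=||\c||^2$. The right-hand side becomes $||\w||^2(1+1)=2||\c||^2$. Hence the required condition reads
$$||\c||^2\le 2||\c||^2,$$
which holds for every pair $i\neq j$, strictly so since $\c>0$. Theorem \ref{complete:inverse} then gives that $\widehat\LL(\c,\w)^\#$ is an $M$-matrix.

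As a sanity check, Lemma \ref{groupinv} in this case yields the off-diagonal entries $G_\omega(x_i,x_j)=\frac{c_ic_j}{\alpha||\c||^2}(1-2)<0$ for $i\ne j$. So the group inverse is in fact a $Z$-matrix with strictly negative off-diagonal entries.

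There is no real obstacle. The only point needing care is the reduction to $\w=\c$, which depends on the invariance of $\widehat\LL(\c,\w)$ under positive scaling of $\w$. That invariance holds because $\alpha/w_i=(\c^\top\w)/w_i$ is homogeneous of degree zero in $\w$. Alternatively, one can avoid the reduction and plug $\w=s\c$ into the inequality directly: both sides scale by $s^2$, so the same comparison $||\c||^2\le 2||\c||^2$ results.
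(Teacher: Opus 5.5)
Your proof is correct and follows the paper's route: the corollary is an immediate consequence of Theorem \ref{complete:inverse} together with the scaling Remark. Your sanity check also agrees with the explicit formula $\widehat\LL(\c,\c)^\#=||\c||^{-4}(||\c||^2 I-\c\c^\top)$ that the paper records right after the corollary. One harmless slip in your last aside: with $\w=s\c$, both sides of the inequality scale by $s^3$, not $s^2$, and this does not affect the conclusion.
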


In particular, when $\sf w=c>0$, $\widehat \LL(\sf c,c)=||\c||^2I-\sf cc^\top$ and $$\widehat \LL(\sf c,c)^\#=\frac{1}{||\c||^4}(||\c||^2I-\sf cc^\top),$$ which is a singular, symmetric and irreducible $M$-matrix.\\

The following corollary provides the necessary and sufficient conditions for  the group inverse of the combinatorial Laplacian of the recoverable complete network $\Gamma=(V,\widehat c)$ to be an $M$-matrix.
 
\begin{corollary}
\label{complete-M-c}
Given $\c>0$, the matrix $\widehat \LL(\c,\e)^\#$ is an $M$-matrix iff 
$$\sum\limits_{k=1}^n\dfrac{1}{c_k}\le n \Big( \dfrac{1}{c_i}+\dfrac{1}{c_{j}}\Big),\hspace{.15cm}i,j=1,\ldots,n,\,\,i\not=j.$$
In particular, the above inequalities always hold when $c_1=\cdots=c_n$. 
\end{corollary}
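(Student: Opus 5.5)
This corollary is the specialization $\w=\e$ of Theorem \ref{complete:inverse}, so the plan is simply to substitute the constant vector into that characterization. Here $\widehat\LL(\c,\e)$ is the matrix of the combinatorial Laplacian of the recoverable complete network. With $w_k=1$ for every $k$ we have $\|\e\|^2=n$, $\sum_k w_k^3/c_k=\sum_k 1/c_k$, and $w_i/c_i+w_j/c_j=1/c_i+1/c_j$. The condition of Theorem \ref{complete:inverse} therefore reads
$$\sum_{k=1}^n\frac{1}{c_k}\le n\Big(\frac1{c_i}+\frac1{c_j}\Big),\qquad i\neq j,$$
which is exactly the stated condition.

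There is one point worth checking so the specialization is legitimate. Theorem \ref{complete:inverse} is stated for positive $\w$, not necessarily a weight. The Remark shows that $\widehat\LL(\c,s\w)=\widehat\LL(\c,\w)$, and the theorem's inequalities are homogeneous of degree $3$ in $\w$ on both sides. Hence no normalization issue arises.

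For the ``in particular'' part, take $c_1=\dots=c_n=c>0$. The left side equals $n/c$ and the right side equals $2n/c$, so the inequality holds. Alternatively, this case follows from Corollary \ref{posmultiple}, since $\c=c\,\e$ is a positive multiple of $\e$.

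No real obstacle is expected. The only care needed is the bookkeeping of $\|\w\|^2=n$ and confirming that the positivity hypotheses of Theorem \ref{complete:inverse} are met by $\w=\e$.
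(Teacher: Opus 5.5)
Your proposal is correct and follows the paper's route exactly: the corollary is obtained by specializing Theorem \ref{complete:inverse} to $\w=\e$, using $\|\e\|^2=n$. The equal-$c_k$ case is then immediate, or follows from Corollary \ref{posmultiple}.
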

\begin{rem}
   When $c_1=\cdots=c_n=1$, the combinatorial Laplacian $\sf \widehat L(c,e)$ reduces to the standard graph Laplacian. From the preceding corollary, it follows that the group inverse of the graph Laplacian of a finite, simple, connected complete graph is an $M$-matrix. 
\end{rem}
We observe that there exists $\c>0$ for which $\Gamma$ does not have $M$-property or the group inverse of the matrix associated to the combinatorial Laplacian is not an $M$-matrix.

Consider $n\ge 3$, $\c>0$ such that $c_i=c_j>2(n-1)c_m$ for some $i,j,m=1,\ldots,n$, where  $i\not=j$, $i\not= m$ and $m\not=j$. Then
$$\sum\limits_{k=1}^n\dfrac{1}{c_k}\ge \dfrac{1}{c_m}+\dfrac{2}{c_i}>\dfrac{2 n}{c_i}= n\Big( \dfrac{1}{c_i}+\dfrac{1}{c_{j}}\Big).$$
Thus, such $\sf c>0$ does not belong to the class of vectors for which $\sf \widehat L(c,e)^\#$ is an $M$-matrix.

The following result establishes a sufficient condition for $\widehat \LL(\c,\sf e)^\#$ to be an $M$-matrix.
\begin{corollary}
    Let $p=\max\limits_{i=1,\ldots,n}\{c_i\}$ and $q=\min\limits_{i=1,\ldots,n}\{c_i\}$. If $p\leq 2q$, then $\widehat \LL(\c,\sf e)^\#$ is an $M$-matrix.
\end{corollary}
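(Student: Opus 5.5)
The plan is to verify the inequalities of Corollary \ref{complete-M-c} directly, using nothing more than the bounds $q\le c_k\le p$. By Corollary \ref{complete-M-c}, $\widehat \LL(\c,\e)^\#$ is an $M$-matrix iff
$$\sum_{k=1}^n\frac{1}{c_k}\le n\Big(\frac{1}{c_i}+\frac{1}{c_j}\Big)\quad\text{for all } i\neq j.$$
So it suffices to bound the left-hand side from above and the right-hand side from below, uniformly in $i,j$.

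For the left-hand side, every $c_k\ge q$, so $1/c_k\le 1/q$ and hence $\sum_{k=1}^n 1/c_k\le n/q$.

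For the right-hand side, every $c_i\le p$ gives $1/c_i+1/c_j\ge 2/p$, and therefore $n(1/c_i+1/c_j)\ge 2n/p$.

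Under the hypothesis $p\le 2q$ we have $1/q\le 2/p$, which gives
$$\sum_k\frac{1}{c_k}\le \frac{n}{q}\le \frac{2n}{p}\le n\Big(\frac{1}{c_i}+\frac{1}{c_j}\Big)$$
for every pair $i\neq j$. Corollary \ref{complete-M-c} then yields the claim.

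There is no genuine obstacle. The only point to check is that the direction of each crude bound is correct: minimum conductance for the sum, maximum for the pair. A slightly sharper version could instead keep the terms $1/c_i+1/c_j$ inside the sum, but this refinement is unnecessary.
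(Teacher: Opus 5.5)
Your proof is correct and essentially identical to the paper's. The paper uses the same chain $\sum_{k=1}^n 1/c_k\le n/q\le 2n/p\le n(1/c_i+1/c_j)$ and then invokes Corollary~\ref{complete-M-c}.
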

\proof We have $$\sum\limits_{k=1}^n\dfrac{1}{c_k}\le \dfrac{n}{q}\le \dfrac{2n}{p} \le n\Big( \dfrac{1}{c_i}+\dfrac{1}{c_{j}}\Big),\; i,j=1,\ldots,n, \; i\neq j.$$ Thus, by Corollary \ref{complete-M-c}, the result holds.\qed
\vspace{.2cm}

\section{Families of complete networks satisfying the $M$-property} 

In the sequel, we assume that $n\ge 3$, since for $n=2$, $\sf \widehat L(c,w)^\#$ is an $M$-matrix for every $\sf c,w>0$.
Now, we reformulate the above expression for $\widehat \LL(\c,\w )$ by defining $$ d_k=\dfrac{ c_k}{w_k},\; k=1,\ldots,n.$$
Then, $\alpha=\displaystyle \sum\limits_{k=1}^n\dfrac{c_k^2}{d_k}$ and 
$$\widehat \LL(\c,\w)=\alpha \DD_\d-\c\c^\top=\begin{bmatrix}  \alpha d_1-c_1^2 & -c_1c_2 & \cdots & -c_1c_n\\
-c_1c_2  & \alpha d_2-c_2^2 & \cdots &-c_2c_n \\
\vdots &\vdots & \ddots & \vdots\\
-c_nc_1  & -c_nc_2 & \cdots & \alpha d_n -c_n^2
\end{bmatrix}, $$
 where ${\sf d}=(d_1,\ldots,d_n)^\top \in \RR^n$ and $\DD_\d$ is the diagonal matrix whose diagonal entries are given by vector $\d$.

Observe that, if $\d=\c$, then $\alpha\DD_\d-\c\c^\top$ is the combinatorial Laplacian of the recoverable complete network $\Gamma$. From now on, given a vector $\d>0$, we denote by $d_*=\min\limits_{k=1,\ldots,n}\{d_k\}$, $d^*=\max\limits_{k=1,\ldots,n}\{d_k\}$, and $$d^{**}=
    \left\{\begin{array}{cl}
       d^*, & \text{if $d^*$ is attained at more than one point.}\\
        \max\limits_{k=1,\ldots,n}\{d_k:d_k<d^*\}, & \text{otherwise.}
    \end{array}\right.$$

In the following result, we reformulate the previous findings in terms of $\c$ and $\d$ in order to characterize families of complete networks satisfying the $M$-property. 

\begin{theorem}\label{complete:c}
All singular, symmetric and irreducible $M$-matrices supported by a complete recoverable network  on $n$ vertices are given by the expression $\alpha\DD_\d-\c\c^\top$,
where {$ \sf c,d>0$}  and $\alpha=\displaystyle \sum\limits_{k=1}^n\dfrac{c_k^2}{d_k}.$
Moreover, $\big(\alpha\DD_\d-\c\c^\top\big)^\#$ is  an $M$-matrix iff 
$$\displaystyle \sum\limits_{k=1}^n\dfrac{c_k^2}{d_k^3}\le \Big(\dfrac{1}{d_i}+\dfrac{1}{d_j}\Big)\Big(\sum\limits_{k=1}^n\dfrac{c_k^2}{d_k^2}\Big),\hspace{.15cm}i,j=1,\ldots,n,\,\,i\not=j,$$ 
or equivalently iff $$\displaystyle \sum\limits_{k=1}^n\dfrac{c_k^2}{d_k^3}\le \Big(\dfrac{1}{d^*}+\dfrac{1}{d^{**}}\Big)\Big(\sum\limits_{k=1}^n\dfrac{c_k^2}{d_k^2}\Big).$$ 
\end{theorem}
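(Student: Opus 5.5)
The plan is to derive Theorem \ref{complete:c} from Theorem \ref{complete:inverse} by the change of variables $d_k=c_k/w_k$, i.e. $w_k=c_k/d_k$.

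\textbf{First part.} By the discussion leading to \eqref{Schr2}, every singular, symmetric, irreducible $M$-matrix supported on a recoverable complete network is $\widehat\LL(\c,\w)=\DD-\c\c^\top$ for some $\c,\w>0$. Here $\DD=\operatorname{diag}(\alpha c_k/w_k)$ and $\alpha=\c^\top\w$. Setting $d_k=c_k/w_k>0$ gives $\DD=\alpha\DD_\d$ and $\alpha=\sum_k c_k w_k=\sum_k c_k^2/d_k$. Conversely, any $\c,\d>0$ gives $\w>0$ via $w_k=c_k/d_k$, so the two parametrizations coincide.

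\textbf{Second part.} Substitute $w_k=c_k/d_k$ into the inequality of Theorem \ref{complete:inverse}. One has $w_k^3/c_k=c_k^2/d_k^3$, $\|\w\|^2=\sum_k c_k^2/d_k^2$ and $w_i/c_i=1/d_i$. The condition therefore becomes
\[
\sum_k \frac{c_k^2}{d_k^3}\le\Big(\frac{1}{d_i}+\frac{1}{d_j}\Big)\sum_k\frac{c_k^2}{d_k^2}\quad\text{for all } i\ne j .
\]
This is exactly the first stated criterion.

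\textbf{Equivalence with the second form.} The left-hand side and the factor $\sum_k c_k^2/d_k^2>0$ do not depend on $i,j$. So the family of inequalities holds iff it holds for the pair $i\ne j$ that minimizes $1/d_i+1/d_j$, that is, maximizes the pair $\{d_i,d_j\}$. The minimum of $1/d_i+1/d_j$ over distinct indices is $1/d^*+1/d^{**}$, and this is where the definition of $d^{**}$ is needed. If $d^*$ is attained twice, both indices may be chosen with value $d^*$. Otherwise one index carries $d^*$, and the best choice for the other is the largest value strictly below $d^*$. One caveat: if all $d_k$ were equal to $d^*$, the definition already covers this, since $d^*$ is then attained more than once (recall $n\ge3$). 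Hence the single inequality with $d^*,d^{**}$ is equivalent to the whole family.

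\textbf{Main obstacle.} Nothing here is deep. The only care needed is the bookkeeping in the definition of $d^{**}$, so that the chosen pair of indices is genuinely distinct.
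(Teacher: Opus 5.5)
Your proposal is correct and follows the paper's own proof essentially step for step. The substitution $d_k=c_k/w_k$ identifies $\widehat\LL(\c,\w)$ with $\alpha\DD_\d-\c\c^\top$ and turns the inequalities of Theorem~\ref{complete:inverse} into the first criterion, after which minimizing $1/d_i+1/d_j$ over distinct indices gives $1/d^*+1/d^{**}$; your appeal to $n\ge 3$ in the all-equal case is unnecessary, since $d^*$ is attained at more than one point as soon as $n\ge 2$.
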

\proof 
If $\AA$ is a singular, symmetric and irreducible $M$-matrices supported by an $n$-complete recoverable network, then there exist $\c, \w>0$ such that $\AA=\widehat \LL(\c,\w)$ (see \eqref{Schr2}). Choosing $d_k=\dfrac{ c_k}{ w_k}$, for any $k=1,\ldots,n$, then $\widehat\LL(\c,\w)=\alpha\DD_\d-\c\c^\top,$ where $\alpha=\displaystyle \sum\limits_{k=1}^n\dfrac{c_k^2}{d_k}.$

By Theorem \ref{complete:inverse}, we conclude that $\Big(\alpha\DD_\d-\c\c^\top\Big)^\#$ is an $M$-matrix iff 
$$\sum\limits_{k=1}^n\dfrac{c_k^2}{d_k^3}=\sum\limits_{k=1}^n\dfrac{w_k^3}{c_k}\le||\w||^2\Big( \dfrac{w_i}{c_i}+\dfrac{w_j}{c_j}\Big)=\Big( \dfrac{1}{d_i}+\dfrac{1}{d_j}\Big)\Big(\sum\limits_{k=1}^n\dfrac{c_k^2}{d_k^2}\Big), \hspace{.25cm}\hbox{for any \,\,  $i,j=1,\ldots,n,\,\,i\not=j$},$$
or equivalently iff $$\Big(\sum\limits_{k=1}^n\dfrac{c_k^2}{d_k^3}\Big)\leq \min\limits_{i
}\Big\{\frac{1}{d_i}+\min\limits_{k\neq i}\Big(\frac{1}{d_k}\Big)\Big\}\Big(\sum\limits_{k=1}^n\dfrac{c_k^2}{d_k^2}\Big)=\Big(\frac{1}{d^*}+\frac{1}{d^{**}}\Big)\Big(\sum\limits_{k=1}^n\dfrac{c_k^2}{d_k^2}\Big),$$ and thus, the final inequality follows.
\qed

\begin{corollary}\label{equald}
Let $\c>0$ be arbitrary. If all the $d_k$ are equal, then $(\alpha \sf D_d-cc^T)^\#$ is an $M$-matrix.
\end{corollary}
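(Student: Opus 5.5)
The plan is to plug the hypothesis directly into the criterion of Theorem \ref{complete:c}. Suppose $d_1=\cdots=d_n=d>0$. Then $d^*=d^{**}=d$, and the sums in the inequality simplify to
$$\sum_{k=1}^n\frac{c_k^2}{d_k^3}=\frac{\|\c\|^2}{d^3},\qquad \Big(\frac{1}{d^*}+\frac{1}{d^{**}}\Big)\sum_{k=1}^n\frac{c_k^2}{d_k^2}=\frac{2}{d}\cdot\frac{\|\c\|^2}{d^2}=\frac{2\|\c\|^2}{d^3}.$$
So the required inequality reads $\|\c\|^2/d^3\le 2\|\c\|^2/d^3$. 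This holds strictly because $\c>0$ and $d>0$. By Theorem \ref{complete:c}, $(\alpha\DD_\d-\c\c^\top)^\#$ is therefore an $M$-matrix.

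A second check goes through the original variables. Since $d_k=c_k/w_k$, the equality of all $d_k$ means exactly that $\w=d^{-1}\c$, so $\c$ and $\w$ are positive multiples of each other. Corollary \ref{posmultiple} then gives the same conclusion, and in fact yields the explicit formula $(\alpha\DD_\d-\c\c^\top)^\#=\|\c\|^{-4}(\|\c\|^2{\sf I}-\c\c^\top)$ up to the scaling noted in the Remark.

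There is no real obstacle here. The only care needed is to record that $d^{**}=d^*$ when the maximum is attained at more than one index. This applies because $n\ge 3$ and all the values coincide, so the simplified form of the criterion is legitimate. Alternatively, one can use the pairwise form of the inequality, for each $i\ne j$, which avoids $d^{**}$ altogether.
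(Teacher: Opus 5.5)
Your proposal is correct and matches the paper, which gives no separate argument and treats this corollary as an immediate consequence of Theorem \ref{complete:c} (equivalently of Corollary \ref{posmultiple}, since equal $d_k$ means $\w$ is a positive multiple of $\c$); with $d_k\equiv d$ the criterion reduces to $\|\c\|^2/d^3\le 2\|\c\|^2/d^3$, exactly as you say. One small point: your explicit formula needs no ``up to scaling'' qualifier, because $\alpha=\|\c\|^2/d$ gives $\alpha\DD_\d-\c\c^\top=\|\c\|^2{\sf I}-\c\c^\top$ exactly.
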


Note that if $\d, \c$ satisfy the inequalities in Theorem \ref{complete:c}, then for any $t,s>0$, the scaled vectors $\widehat \d=t\d,\widehat \c= s\c$ also satisfy those inequalities. 
In fact, $\widehat \alpha\DD_{\hat \d}-\widehat\c\widehat\c^\top=s^2(\alpha\DD_\d-\c\c^\top)$, where $\widehat\alpha=\frac{s^2}{t}\alpha$.
It is well known that for an $n\times n$ real, singular matrix $\sf A$ (when the group inverse exists) and for $r\ne 0$, $(r {\sf A})^\#=\frac{1}{r}\sf A^\#.$ Thus, for a positive scalar $r$, the $M$-matrix property is preserved by the group inverse.
For this reason, we obtain the following corollary. 

\begin{corollary}
    Let $\widehat \d=t\d$ and $\widehat\c=s\c$ for any $t,s>0$. Then, $(\widehat \alpha\DD_{\widehat\d}-\widehat\c\widehat\c^\top)^\#$ is an $M$-matrix if and only if $(\alpha\DD_\d-\c\c^\top)^\#$ is an $M$-matrix.
\end{corollary}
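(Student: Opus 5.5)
The plan is to reduce the statement to the scaling identity recorded just before it, together with the elementary fact $(r\AA)^\#=\frac1r\AA^\#$ for $r\neq 0$.

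First I compute $\widehat\alpha$. By definition,
$$\widehat\alpha=\sum_{k=1}^n\frac{\widehat c_k^2}{\widehat d_k}=\sum_{k=1}^n\frac{s^2c_k^2}{t\,d_k}=\frac{s^2}{t}\,\alpha .$$
Substituting this gives
$$\widehat\alpha\DD_{\widehat\d}-\widehat\c\widehat\c^\top=\frac{s^2}{t}\,\alpha\, t\,\DD_\d-s^2\c\c^\top=s^2\big(\alpha\DD_\d-\c\c^\top\big).$$

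Next I pass to group inverses. The matrix $\AA=\alpha\DD_\d-\c\c^\top$ is symmetric, so its group inverse exists. With $r=s^2>0$, the group inverse of $r\AA$ is $r^{-1}\AA^\#$. One checks this directly from the three defining equations: $(r\AA)(r^{-1}\AA^\#)(r\AA)=r\AA$, $(r^{-1}\AA^\#)(r\AA)(r^{-1}\AA^\#)=r^{-1}\AA^\#$, and commutativity is inherited. Uniqueness of the group inverse then gives
$$(\widehat\alpha\DD_{\widehat\d}-\widehat\c\widehat\c^\top)^\#=s^{-2}\,(\alpha\DD_\d-\c\c^\top)^\#.$$

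Finally, multiplication by a positive scalar preserves the sign pattern, so the $Z$-matrix property is unchanged. It also preserves positive semidefiniteness, and a symmetric $Z$-matrix is an $M$-matrix iff it is positive semidefinite. Hence one group inverse is an $M$-matrix iff the other is.

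As a cross-check, one can use Theorem \ref{complete:c} directly. Under the scaling, both sides of its inequality pick up the same factor $s^2t^{-3}$: on the left, $\sum \widehat c_k^2/\widehat d_k^3=s^2t^{-3}\sum c_k^2/d_k^3$; on the right, $(1/\widehat d_i+1/\widehat d_j)\sum \widehat c_k^2/\widehat d_k^2=s^2t^{-3}(\cdots)$. So the criterion is invariant. There is no real obstacle; the only point needing care is verifying the group-inverse scaling rule via uniqueness.
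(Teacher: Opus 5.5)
Your proof is correct and follows essentially the same route as the paper. The paper also derives $\widehat\alpha\DD_{\widehat\d}-\widehat\c\widehat\c^\top=s^2(\alpha\DD_\d-\c\c^\top)$ with $\widehat\alpha=\frac{s^2}{t}\alpha$, then uses $(r\AA)^\#=\frac1r\AA^\#$ and the fact that multiplying by a positive scalar preserves the $M$-matrix property; your cross-check via the scale-invariance of the inequality in Theorem \ref{complete:c} likewise mirrors the paper's preceding remark.
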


We can extend Corollary \ref{equald} by allowing two different diagonal values. To do this, we need the following lemma.

\begin{lemma}
\label{polynomial} Given $b>0$, the unique real root of the polynomial $p(x)=x^3+b(2x-1)$ is  
$$r(b)=\dfrac{\sqrt[3]{12b}}{6\sqrt[3]{9+ \sqrt{96b + 81}}}\,\Big(\sqrt[3]{(9+ \sqrt{96b + 81})^2}-2\sqrt[3]{12b}\Big).$$
Moreover, $r(b)\in \big(0,\frac{1}{2}\big)$. 
\end{lemma}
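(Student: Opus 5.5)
Since $p'(x)=3x^2+2b>0$ for $b>0$, the polynomial $p$ is strictly increasing on $\RR$. Hence it has exactly one real root; the other two roots are a pair of non-real conjugates. We also have $p(0)=-b<0$ and $p(\tfrac12)=\tfrac18>0$, so by the intermediate value theorem this root lies in $(0,\tfrac12)$. This settles the location claim $r(b)\in(0,\frac12)$ and the uniqueness before any explicit formula is needed.

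For the closed form I will apply Cardano's formula to the depressed cubic $x^3+px+q$ with $p=2b$ and $q=-b$. The discriminant quantity is $\frac{q^2}{4}+\frac{p^3}{27}=\frac{b^2}{4}+\frac{8b^3}{27}>0$, which again shows there is a single real root. Cardano gives
$$x=\sqrt[3]{\tfrac b2+\sqrt{\tfrac{b^2}{4}+\tfrac{8b^3}{27}}}+\sqrt[3]{\tfrac b2-\sqrt{\tfrac{b^2}{4}+\tfrac{8b^3}{27}}}.$$
Call the two cube roots $u$ and $v$. They are real and satisfy $uv=-p/3=-2b/3$, so $x=u-\frac{2b}{3u}$.

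The next step is to rewrite $u$ in the paper's normalization. One has $\frac{b^2}{4}+\frac{8b^3}{27}=\frac{b^2(81+96b)}{324}$, so
$$u^3=\frac{b(9+\sqrt{96b+81})}{18}.$$
Put $s=9+\sqrt{96b+81}$. Then $u=\sqrt[3]{bs/18}$, and
$$x=u-\frac{2b}{3u}=\frac{3u^2-2b}{3u}.$$
To match the stated expression
$$\frac{\sqrt[3]{12b}}{6\sqrt[3]{s}}\Big(\sqrt[3]{s^2}-2\sqrt[3]{12b}\Big)=\frac{\sqrt[3]{12b\,s^2}}{6\sqrt[3]{s}}-\frac{2\sqrt[3]{(12b)^2}}{6\sqrt[3]{s}},$$
I will compare the two terms separately.

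The first term is $\sqrt[3]{12bs}/6=\sqrt[3]{12bs/216}=\sqrt[3]{bs/18}=u$, so it matches. For the second term, I must check that $\frac{2b}{3u}=\frac{\sqrt[3]{144b^2}}{3\sqrt[3]{s}}$. This reduces to $\frac{8b^3}{27u^3}=\frac{144b^2}{27s}$, that is, $u^3=bs/18$, which holds. So the formula agrees with the Cardano root.

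The main obstacle is this bookkeeping of cube roots: choosing the real branches consistently and confirming that the nested radicals simplify to exactly the printed form. Once the identity $u^3=bs/18$ is isolated, the remaining checks are routine algebra.
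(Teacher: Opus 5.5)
Your proof is correct and complete. The paper states this lemma without proof, so your argument supplies what is left implicit there:
\begin{itemize}
\item strict monotonicity of $p$ (since $p'(x)=3x^2+2b>0$) gives uniqueness of the real root;
\item the sign change $p(0)=-b<0<\frac18=p(\frac12)$ gives $r(b)\in(0,\frac12)$;
\item Cardano's formula, together with the identity $u^3=bs/18$ where $s=9+\sqrt{96b+81}$, matches the printed radical term by term.
\end{itemize}
Your cube-root bookkeeping is sound, because every quantity you compare by cubing is a positive real.

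Getting the location from the intermediate value theorem, rather than by estimating the radical, is the right choice. It is also exactly what the paper relies on afterwards. In the corollary on two diagonal values, the step from $t^3+b(2t-1)\ge 0$ to $t\ge r(b)$ uses the monotonicity you proved. Its final clause, that the condition holds whenever the ratio of the two diagonal values lies in $[\frac12,2]$, uses $r(b)<\frac12$.

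If you prefer not to depend on Cardano's formula and its real-branch conventions, you can verify the root directly. Take $x=u-\frac{2b}{3u}$ with $u^3=bs/18$. Then
$$x^3+2bx=u^3-\frac{8b^3}{27u^3}=\frac{bs}{18}-\frac{16b^2}{3s}.$$
Since $s(s-18)=96b$, this equals $\frac{bs}{18}-\frac{b(s-18)}{18}=b$, so $p(x)=0$.
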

\begin{corollary}
\label{weight:c}
Let $d,s>0$ and $\emptyset \not=I\subsetneq \{1,\ldots,n\}$.
Given $\c>0$, define the vector $\d$ as 
$$d_k=\begin{cases}
    s,\;  k\in I\\
    d, \; k \notin I.
\end{cases}$$
If $k_s=\displaystyle \sum\limits_{k\in I}c_k^2$ and $k_d=||\c||^2-k_s$, then $(\alpha\DD_{\d}-\c\c^\top)^\#$ is an $M$-matrix iff one of the following conditions hold:
\begin{itemize}
\item[{\rm (i)}] $|I|=1$ and $d\le s$. 
\item[{\rm (ii)}] $|I|= n-1$ and $s\le d$.
\item[{\rm (iii)}] $2\le |I|\le n-2$ and $r\Big(\dfrac{k_d}{k_s}\Big) \le \dfrac{d}{s}\le \Big(r\Big(\dfrac{k_s}{k_d}\Big)\Big)^{-1}.$  In particular, this condition holds when $\dfrac{1}{2}\le \dfrac{d}{s}\le 2$.
\end{itemize}

\end{corollary}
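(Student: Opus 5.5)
We will apply Theorem \ref{complete:c} in its pairwise form: $(\alpha\DD_\d-\c\c^\top)^\#$ is an $M$-matrix iff $S_3\le (1/d_i+1/d_j)S_2$ for all $i\ne j$, where
$$S_3=\sum_{k=1}^n\frac{c_k^2}{d_k^3}=\frac{k_s}{s^3}+\frac{k_d}{d^3},\qquad S_2=\sum_{k=1}^n\frac{c_k^2}{d_k^2}=\frac{k_s}{s^2}+\frac{k_d}{d^2}.$$
Since $\d$ takes only the two values $s$ and $d$, there are at most three distinct inequalities. A \emph{mixed} pair ($i\in I$, $j\notin I$) always exists. An $(s,s)$ pair exists iff $|I|\ge 2$, and a $(d,d)$ pair exists iff $|I|\le n-2$. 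The plan is to simplify each inequality separately and then read off the cases from which pairs are present.

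\textbf{The three inequalities.}

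For the mixed pair, expanding gives
$$\Big(\frac1s+\frac1d\Big)S_2-S_3=\frac{k_s}{s^2d}+\frac{k_d}{sd^2}>0,$$
so this inequality never obstructs.

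For an $(s,s)$ pair,
$$\frac{2}{s}S_2-S_3=\frac{k_s}{s^3}+\frac{k_d(2d-s)}{sd^3}.$$
Setting $t=d/s$ and multiplying by $s^3t^3/k_s>0$, the condition becomes $p(t)=t^3+b(2t-1)\ge 0$ with $b=k_d/k_s$.

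For a $(d,d)$ pair, the same computation with the roles of $s$ and $d$ exchanged gives $u^3+b'(2u-1)\ge0$ with $u=s/d$ and $b'=k_s/k_d$.

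\textbf{Converting to bounds on $d/s$.} We use Lemma \ref{polynomial}. Since $p'(x)=3x^2+2b>0$, $p$ is strictly increasing, so $p(t)\ge0$ iff $t\ge r(b)$. Hence the $(s,s)$ condition reads $d/s\ge r(k_d/k_s)$, and the $(d,d)$ condition reads $s/d\ge r(k_s/k_d)$, that is, $d/s\le r(k_s/k_d)^{-1}$.

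\textbf{Assembling the cases.}
\begin{itemize}
\item[(iii)] When $2\le|I|\le n-2$, both bounds are active, which gives the two-sided inequality in (iii). Because $r(\cdot)\in(0,\tfrac12)$, we have $r(k_d/k_s)<\tfrac12$ and $r(k_s/k_d)^{-1}>2$, so $\tfrac12\le d/s\le 2$ is sufficient.
\item[(ii)] When $|I|=n-1$, only the $(s,s)$ and mixed pairs occur.
\item[(i)] When $|I|=1$ (recall $n\ge3$), only the $(d,d)$ and mixed pairs occur.
\end{itemize}
To match the stated form of (i) and (ii), I would also use the equivalent $d^*,d^{**}$ form of Theorem \ref{complete:c}. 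For instance, if $|I|=1$ and $d\le s$, then $d^*=s$ is attained only once and $d^{**}=d$, so the single relevant condition is the mixed one, which always holds.

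\textbf{Main obstacle.} The delicate step is the converse direction in cases (i) and (ii). My computation shows that the $(d,d)$ condition when $|I|=1$ is $d/s\le r(k_s/k_d)^{-1}$, a threshold strictly larger than $2$, rather than $d\le s$. I would therefore recheck carefully whether the stated inequality $d\le s$ is sharp, or whether the correct characterization in (i) is $d/s\le r(k_s/k_d)^{-1}$, and symmetrically $d/s\ge r(k_d/k_s)$ in (ii). The direction ``$d\le s$ implies the $M$-property'' is secure either way.
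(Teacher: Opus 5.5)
Your derivation is correct and follows the same route as the paper. The paper's Cases 1, 2 and 3, organised by the value of $1/d^*+1/d^{**}$, are exactly your $(s,s)$, $(d,d)$ and mixed inequalities, reduced through Theorem \ref{complete:c} and Lemma \ref{polynomial} to the sign of $p(t)$. What you call the ``main obstacle'' is not a gap in your argument. It is an error in the statement: the ``only if'' direction of (i) and (ii) is false.

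A counterexample: take $n=3$, $\c=(1,1,1)^\top$ and $\d=(1,2,2)^\top$, so $I=\{1\}$, $s=1$, $d=2$. Then $\alpha=2$ and
\[
\alpha\DD_\d-\c\c^\top=\begin{bmatrix}1&-1&-1\\-1&3&-1\\-1&-1&3\end{bmatrix},\qquad
(\alpha\DD_\d-\c\c^\top)^\#=\frac{1}{72}\begin{bmatrix}8&-8&-8\\-8&17&-1\\-8&-1&17\end{bmatrix}.
\]
The group inverse is a $Z$-matrix, hence an $M$-matrix. Equivalently, in your notation $S_3=\tfrac54\le\tfrac32=(\tfrac1{d^*}+\tfrac1{d^{**}})S_2$ with $d^*=d^{**}=2$. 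Yet none of the conditions applies:
\begin{itemize}
\item (i) fails because $|I|=1$ but $d>s$;
\item (ii) and (iii) do not apply for $|I|=1$, $n=3$;
\item relabelling as $I=\{2,3\}$, $s=2$, $d=1$ does not rescue (ii), since it would then require $s\le d$.
\end{itemize}

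The paper's own proof already contains your computation. Its Case 2 is stated for ``$s\le d$ and $|I|\le n-2$'', which for $n\ge 3$ includes $|I|=1$, and there it derives the nontrivial condition $s/d\ge r(k_s/k_d)$. Case 3 only shows that $d\le s$ is sufficient, and the statement then drops the Case 2 threshold.

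So do not try to ``match the stated form'' of (i) and (ii). Your argument proves the ``if'' direction as stated, together with the correct characterization:
\begin{itemize}
\item for $|I|=1$, the $M$-property holds iff $d/s\le r(k_s/k_d)^{-1}$;
\item for $|I|=n-1$, it holds iff $d/s\ge r(k_d/k_s)$;
\item for $2\le |I|\le n-2$, condition (iii) is correct as stated.
\end{itemize}
This corrected version is consistent with the paper's examples. For $\c=(1,1,1)^\top$ and $|I|=1$ the threshold is $r(1/2)^{-1}\approx 2.36$. The vector $\d=(1,2,2)^\top$ satisfies it, and $\d=(2,5,5)^\top$, from the example in the last section, violates it, matching the paper's conclusion there.
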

\proof First observe that 
$$\displaystyle \sum\limits_{k=1}^n\dfrac{c_k^2}{d_k^3}=\dfrac{k_s}{s^3}+\dfrac{k_d}{d^3},\hspace{.5cm}\sum\limits_{k=1}^n\dfrac{c_k^2}{d_k^2}=\dfrac{k_s}{s^2}+\dfrac{k_d}{d^2},\hspace{.5cm}\frac{1}{d^*}+\dfrac{1}{d^{**}}=\left\{\begin{array}{cl}
\dfrac{2}{s}, & d\le s, |I|\ge 2,\\[3ex]
\dfrac{2}{d}, & s\le d, |I|\le n-2,\\[3ex]
\dfrac{1}{s}+\dfrac{1}{d}, & \hbox{otherwise}.\end{array}\right.$$ 
\noindent\textbf{Case 1.} {$d\le s$ and $|I|\ge 2$}\\

By Theorem \ref{complete:c}, $(\alpha\DD_{ \d}-\c\c^\top)^\#$ is an $M$-matrix iff
$$\dfrac{k_s}{s^3}+\dfrac{k_d}{d^3}\le \dfrac{2}{s}\Big(\dfrac{k_s}{s^2}+\dfrac{k_d}{d^2}\Big),$$
iff $s^2(s-2d)k_d\le d^3k_s$, or equivalently, iff $t^3+ b(2t-1)\ge 0$, where $t=\dfrac{d}{s}$ and $b=\dfrac{k_d}{k_s}$. 
Now, using Lemma \ref{polynomial}, $r(b)$ is the unique real root of the equation $t^3+b(2t-1)=0$. Hence, $(\alpha \DD_\d-\c\c^\top)^\#$ is an $M$-matrix iff $$t\ge r(b),\; \text{i.e.}\; \frac{d}{s}\ge r\Big(\frac{k_d}{k_s}\Big).$$

\noindent\textbf{Case 2.}
$s\le d$ and $|I|\le n-2$\\

Again by Theorem \ref{complete:c}, $(\alpha\DD_{ \d}-\c\c^\top)^\#$ is an $M$-matrix iff
$$\dfrac{k_s}{s^3}+\dfrac{k_d}{d^3}\le \dfrac{2}{d}\Big(\dfrac{k_s}{s^2}+\dfrac{k_d}{d^2}\Big),$$
iff $d^2(d-2s)k_s\le s^3k_d$, or equivalently, iff $\Tilde{t}^3+ \Tilde{b}(2\Tilde{t}-1)\ge 0$, where $\Tilde{t}=\dfrac{s}{d}$ and $\Tilde{b}=\dfrac{k_s}{k_d}$. Again, by Lemma \ref{polynomial}, we have $\dfrac{s}{d}\ge r\Big(\dfrac{k_s}{k_d}\Big) $.

Combining case 1 and case 2, we get the condition (iii).

\noindent\textbf{Case 3.} {Either $d\le s$ and $|I|=1$ or $s\le d$ and $|I|=n-1$}\\

Note that,
$$\dfrac{k_s}{s^3}+\dfrac{k_d}{d^3}<\dfrac{k_s}{s^3}+\frac{k_d}{sd^2}+\dfrac{k_d}{d^3}+\frac{k_s}{s^2d}
=\Big(\dfrac{1}{s}+\dfrac{1}{d}\Big)\Big(\dfrac{k_s}{s^2}+\dfrac{k_d}{d^2}\Big).$$
Thus, the inequalities in Theorem \ref{complete:c} are satisfied and, in this case,
%
$(\alpha\DD_{\d}-\c\c^\top)^\#$ is an $M$-matrix. 
\qed

\begin{corollary}
\label{weight:c}
Let $\d>0$ such that $d^*\le \sqrt[3]{2}d_*$. Then, for any $\c>0$, $(\alpha\DD_{\d}-\c\c^\top)^\#$ is an $M$-matrix.
\end{corollary}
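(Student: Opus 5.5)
We will use the criterion of Theorem \ref{complete:c}. It suffices to prove, for every pair $i\neq j$,
$$\sum_{k=1}^n\frac{c_k^2}{d_k^3}\le \Big(\frac1{d_i}+\frac1{d_j}\Big)\sum_{k=1}^n\frac{c_k^2}{d_k^2}.$$
The idea is to bound each side term by term using only $d_*$ and $d^*$.

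For the left-hand side, every $k$ satisfies $d_k\ge d_*$. Hence
$$\frac{c_k^2}{d_k^3}=\frac{1}{d_k}\,\frac{c_k^2}{d_k^2}\le \frac{1}{d_*}\,\frac{c_k^2}{d_k^2},$$
and summing over $k$ gives $\sum_k c_k^2/d_k^3\le \frac1{d_*}\sum_k c_k^2/d_k^2$.

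For the right-hand side, $d_i,d_j\le d^*$ gives $\frac1{d_i}+\frac1{d_j}\ge \frac{2}{d^*}$. The criterion therefore follows once $\frac1{d_*}\le \frac2{d^*}$, that is, once $d^*\le 2d_*$.

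The hypothesis $d^*\le\sqrt[3]{2}\,d_*$ is stronger than $d^*\le 2d_*$, because $\sqrt[3]{2}<2$. So the hypothesis implies the criterion, and the conclusion follows from Theorem \ref{complete:c}.

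The only point needing care is to match the argument to the stated constant. The $\sqrt[3]{2}$ suggests the authors used a cruder bound, for instance $\sum c_k^2/d_k^3\le \frac{1}{d_*^3}\|\c\|^2$ together with $\sum c_k^2/d_k^2\ge \frac{1}{(d^*)^2}\|\c\|^2$, which requires $(d^*)^3\le 2d_*^3$. Either way the inequality is immediate, and no real obstacle remains. The sharper bound shows that the constant $\sqrt[3]{2}$ could in fact be replaced by $2$, consistent with the case $\frac12\le d/s\le 2$ of the preceding corollary. Since $\c>0$ is arbitrary and only positivity of the $c_k^2$ is used, the result holds for every $\c>0$.
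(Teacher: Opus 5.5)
Your proof is correct. It uses the same criterion (Theorem \ref{complete:c}) and the same idea of bounding both sides through $d_*$ and $d^*$, but your estimate of the left-hand side is sharper than the paper's. The paper's proof is exactly the cruder chain you guessed:
\[
\sum_{k=1}^n\frac{c_k^2}{d_k^3}\le\frac{\|\c\|^2}{d_*^3}\le\frac{2\|\c\|^2}{(d^*)^3}\le\Big(\frac1{d_i}+\frac1{d_j}\Big)\sum_{k=1}^n\frac{c_k^2}{d_k^2}.
\]
This chain bounds $\sum_k c_k^2/d_k^2$ above by $\|\c\|^2/d_*^2$ on one side and below by $\|\c\|^2/(d^*)^2$ on the other. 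It therefore loses a factor $(d^*/d_*)^2$, and that loss is where the constant $\sqrt[3]{2}$ comes from.

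Your termwise comparison $c_k^2/d_k^3\le d_*^{-1}c_k^2/d_k^2$ keeps that sum intact on both sides, so it cancels and only $d^*\le 2d_*$ is needed. In effect you verify the hypothesis of Corollary \ref{sufficient} directly. With no extra effort you prove a strictly stronger statement, and $2$ is the best constant of this form. To see this, write the criterion as $\sum_k\frac{c_k^2}{d_k^2}\big(\frac1{d_k}-\frac1{d^*}-\frac1{d^{**}}\big)\le 0$. It then holds for every $\c>0$ if and only if $\frac1{d_*}\le\frac1{d^*}+\frac1{d^{**}}$, and when $d^*$ is attained at least twice this is exactly $d^*\le 2d_*$. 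The paper's version has no compensating advantage; both arguments are one line.
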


\proof We have  that 
$$\displaystyle \sum\limits_{k=1}^n\dfrac{c_k^2}{d_k^3}\le \dfrac{||\c||^2}{d_*^3}\le \dfrac{2||\c||^2}{(d^*)^3}\le \Big( \dfrac{1}{d_i}+\dfrac{1}{d_j}\Big)\Big(\sum\limits_{k=1}^n\dfrac{c_k^2}{d_k^2}\Big),\hspace{.15cm}i,j=1,\ldots,n,\,\,i\not=j.$$ 
Hence, the conclusion holds.
\qed

The following corollary is used in the proof of Proposition \ref{complete-c-d}.
\begin{corollary}\label{sufficient}
   Given $\c>0$, if $\displaystyle\sum\limits_{k=1}^n\dfrac{c_k^2}{d_k^3}\le \dfrac{2}{d^*}\Big(\sum\limits_{k=1}^n\dfrac{c_k^2}{d_k^2}\Big)$, then $(\alpha \DD_\d-\c\c^\top)^\#$ is an $M$-matrix.
\end{corollary}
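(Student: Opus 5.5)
The plan is to reduce the statement directly to the criterion of Theorem \ref{complete:c}, specifically its second (equivalent) form
$$\sum_{k=1}^n\frac{c_k^2}{d_k^3}\le\Big(\frac{1}{d^*}+\frac{1}{d^{**}}\Big)\Big(\sum_{k=1}^n\frac{c_k^2}{d_k^2}\Big).$$
So it suffices to show that the right-hand side of the hypothesis, $\frac{2}{d^*}\sum_k c_k^2/d_k^2$, is bounded above by this right-hand side. Since the common factor $\sum_k c_k^2/d_k^2$ is positive, this reduces to the scalar inequality
$$\frac{2}{d^*}\le \frac{1}{d^*}+\frac{1}{d^{**}},$$
that is, $d^{**}\le d^*$.

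Next I check $d^{**}\le d^*$ from the definition. If $d^*$ is attained at more than one index, then $d^{**}=d^*$. Otherwise $d^{**}$ is the maximum of the $d_k$ strictly below $d^*$, so $d^{**}<d^*$. This set is nonempty because $n\ge 3$ is assumed in this section. In either case $1/d^*\le 1/d^{**}$.

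Chaining the hypothesis with this bound gives the inequality of Theorem \ref{complete:c}, and hence $(\alpha\DD_\d-\c\c^\top)^\#$ is an $M$-matrix. Equivalently, one can use the first form of Theorem \ref{complete:c} directly. For any $i\neq j$ we have $d_i,d_j\le d^*$, so $\frac{1}{d_i}+\frac{1}{d_j}\ge \frac{2}{d^*}$. Multiplying by $\sum_k c_k^2/d_k^2>0$ and combining with the hypothesis yields every required inequality at once. I would use this second route in the write-up, since it avoids the case distinction in the definition of $d^{**}$ entirely.

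There is no real obstacle here. The only point needing care is the direction of the comparison: the hypothesis uses the weakest possible right-hand side, the minimum over $i\neq j$ of $\frac{1}{d_i}+\frac{1}{d_j}$ being at least $\frac{2}{d^*}$. This makes the condition merely sufficient, not necessary, which matches the statement.
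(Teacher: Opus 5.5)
Your proof is correct and matches the paper's argument. The paper states this corollary without a written proof, as an immediate consequence of Theorem \ref{complete:c} via the bound $\frac{1}{d_i}+\frac{1}{d_j}\ge \frac{2}{d^*}$ for $i\neq j$ (equivalently $d^{**}\le d^*$); the same bound appears in the proof of the preceding corollary with hypothesis $d^*\le \sqrt[3]{2}\,d_*$.
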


Now, we show how to construct singular, symmetric, and irreducible $M$-matrices on recoverable complete networks such that their group inverse is also an $M$-matrix by accurately choosing the conductances and diagonal values of $\DD_\d$. We can assume w.l.o.g. that the diagonal values are ordered, specifically $0<d_1\le \cdots\le d_n$.

\begin{propo}
\label{complete-c-d}
Given $c_1>0,\; d_1>0$, if $c_k>0 \; (k=2,\ldots,n)$ are chosen arbitrarily and $d_k>0\; (k=2,\ldots,n)$ are chosen recursively satisfying the inequalities
$$0<d_{k-1}\le d_k\le 2\Big(\sum\limits_{j=1}^{k-1}\dfrac{c_j^2}{d_j^2}\Big)\Big(\sum\limits_{j=1}^{k-1}\frac{c_j^2}{d_j^3}\Big)^{-1},$$ then $(\alpha\DD_{ \d}-\c\c^\top)^\#$ is an $M$-matrix.
\end{propo}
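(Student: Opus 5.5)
We will reduce to Corollary \ref{sufficient}. Because the $d_k$ are nondecreasing, $d^*=d_n$, so it suffices to prove
$$S_n:=\sum_{k=1}^n\frac{c_k^2}{d_k^3}\le \frac{2}{d_n}\,T_n,\qquad T_n:=\sum_{k=1}^n\frac{c_k^2}{d_k^2}.$$
The recursive choice gives $d_k\le 2T_{k-1}/S_{k-1}$, equivalently $d_kS_{k-1}\le 2T_{k-1}$. We will prove by induction on $m$ the stronger claim
$$d_mS_m\le 2T_m \qquad (m=1,\ldots,n).$$

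For $m=1$ the claim reads $c_1^2/d_1^2\le 2c_1^2/d_1^2$, which is trivially true.

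For the inductive step, assume the claim for $m-1$ and split off the last term:
$$d_mS_m=d_mS_{m-1}+\frac{c_m^2}{d_m^2}\le 2T_{m-1}+\frac{c_m^2}{d_m^2}\le 2T_{m-1}+2\frac{c_m^2}{d_m^2}=2T_m.$$
The first inequality is exactly the recursive hypothesis on $d_m$. Taking $m=n$ gives the condition of Corollary \ref{sufficient}, and hence $(\alpha\DD_\d-\c\c^\top)^\#$ is an $M$-matrix.

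Note that this step uses only the defining upper bound on $d_m$, not the inductive hypothesis. The monotonicity $d_{k-1}\le d_k$ matters only to identify $d^*=d_n$.

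We should still check that the recursion is always feasible, i.e. that the interval for $d_k$ is nonempty. This needs $d_{k-1}\le 2T_{k-1}/S_{k-1}$, which is exactly the claim at $m=k-1$. So the induction also shows the interval is nonempty.

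The main subtlety is simply recognizing that the bound in the statement is designed to give $d_mS_{m-1}\le 2T_{m-1}$ directly, and that $d^*=d_n$. I expect no real obstacle beyond this bookkeeping.
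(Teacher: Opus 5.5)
Your proof is correct and is essentially the paper's argument. Splitting off the last term and using the upper bound $d_mS_{m-1}\le 2T_{m-1}$ gives $d_mS_m\le 2T_m$; this makes the next interval $[d_m,\,2T_m/S_m]$ nonempty and, at $m=n$ with $d^*=d_n$, gives the hypothesis of Corollary~\ref{sufficient} (the paper records the strict form $d_mS_m<2T_m$, but the non-strict inequality suffices).
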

\proof 
Let $\c>0$. Given $d_1>0$, we show that $d_k(k=2,\ldots,n)$ can be chosen such that \begin{equation}\label{induct}
d_{k-1}\le d_k\le 2\Big(\sum\limits_{j=1}^{k-1}\frac{c_j^2}{d_j^2}\Big)\Big(\sum\limits_{j=1}^{k-1}\frac{c_j^2}{d_j^3}\Big)^{-1}.
\end{equation}
We apply induction on $k$. Since $d_1<2d_1=2\Big(\dfrac{c_1^2}{d_1^2}\Big)\Big(\dfrac{c_1^2}{d_1^3}\Big)^{-1}$, we choose $d_2$ such that $$d_1\le d_2 \le 2\Big(\dfrac{c_1^2}{d_1^2}\Big)\Big(\dfrac{c_1^2}{d_1^3}\Big)^{-1}.$$ 
Thus, the base step ($k=2$) is established. Assume that, the inequality (\ref{induct}) is true for $k=m$. Then, we have 
$$d_{m-1}\le d_m\le 2\Big(\sum\limits_{j=1}^{m-1}\frac{c_j^2}{d_j^2}\Big)\Big(\sum\limits_{j=1}^{m-1}\frac{c_j^2}{d_j^3}\Big)^{-1}.$$
Now $$\sum\limits_{j=1}^{m}\frac{c_j^2}{d_j^3}=\dfrac{c_m^2}{d_m^3}+\sum\limits_{j=1}^{m-1}\frac{c_j^2}{d_j^3}< \dfrac{2c_m^2}{d_m^3}+\dfrac{2}{d_{m}}\Big(\sum\limits_{j=1}^{m-1}\dfrac{c_j^2}{d_j^2}\Big)=\dfrac{2}{d_m}\sum\limits_{j=1}^{m}\dfrac{c_j^2}{d_j^2}.$$
So, we get $$d_m<2 \Big(\sum\limits_{j=1}^{m}\dfrac{c_j^2}{d_j^2}\Big)\Big(\sum\limits_{j=1}^{m}\dfrac{c_j^2}{d_j^3}\Big)^{-1}.$$
By choosing $d_{m+1}$ such that
$$d_m\le d_{m+1}\le 2 \Big(\sum\limits_{j=1}^{m}\dfrac{c_j^2}{d_j^2}\Big)\Big(\sum\limits_{j=1}^{m}\dfrac{c_j^2}{d_j^3}\Big)^{-1},$$ we obtain the inequality (\ref{induct}).
By setting $k=n+1$ in the inequality, we have
$$d_{n}\le 2 \Big(\sum\limits_{j=1}^{n}\dfrac{c_j^2}{d_j^2}\Big)\Big(\sum\limits_{j=1}^{n}\dfrac{c_j^2}{d_j^3}\Big)^{-1},$$ and since $d_n=d^*$,
 Corollary \ref{sufficient} applies.
\qed

An illustration of Proposition 
\ref{complete-c-d} is presented next.

\begin{ex}
Here, we construct a $3\times 3$ singular, irreducible, symmetric $M$-matrix arising from $\Gamma$ whose group inverse is an $M$-matrix.
   Let $c_1=1, d_1=2.$ Now, $2\Big(\dfrac{c_1^2}{d_1^2}\Big)\Big(\dfrac{c_1^2}{d_1^3}\Big)^{-1}=4.$ Choose $d_2=3$ lying between $d_1=2$ and $2\Big(\dfrac{c_1^2}{d_1^2}\Big)\Big(\dfrac{c_1^2}{d_1^3}\Big)^{-1}=4$ and $c_2=1$, chosen arbitrarily. Now
   $$2\Big(\frac{c_1^2}{d_1^2}+\frac{c_2^2}{d_2^2}\Big)\Big(\frac{c_1^2}{d_1^3}+\frac{c_2^2}{d_2^3}\Big)^{-1}=\frac{156}{35}.$$ Next, we choose $d_3=4$ and $c_3=2$.
   So, we have:
   $$\alpha\DD_\d-\c\c^\top=\begin{bmatrix}
       \frac{8}{3} & -1 & -2\\
       -1 & \frac{9}{2} & -2\\
       -2 & -2 & \frac{10}{3}
   \end{bmatrix},$$ and its group inverse is given by: $$\frac{1}{10648}\begin{bmatrix}
       1401 & -738 & -909 \\
-738 & 1620 & -342 \\
-909 & -342 & 1137
       \end{bmatrix},$$
       which is an $M$-matrix since it is a $Z$-matrix and all its principal minors are nonnegative.
   \end{ex}
   
\begin{rem}
    The recursive inequalities in Proposition \ref{complete-c-d} are not necessary for $(\alpha \DD_\d-\c\c^\top)^\#$ to be an $M$-matrix.
Let $\c=(1,1,1)^\top$, $d_1=d_2=2$ and $d_3=5$. Now $$2\Big(\frac{c_1^2}{d_1^2}+\frac{c_2^2}{d_2^2}\Big)\Big(\frac{c_1^2}{d_1^3}+\frac{c_2^2}{d_2^3}\Big)^{-1}=4,$$ and $d_3\notin[2,4]$. 
Here,
$\alpha\DD_\d-\c\c^\top=\begin{bmatrix}
    \frac{7}{5} & -1 & -1\\
    -1 & \frac{7}{5} & -1\\
    -1 & -1 & 5
\end{bmatrix}$ and its group inverse: $$(\alpha\DD_\d-\c\c^\top)^\#=\frac{1}{5832}\begin{bmatrix}
    1255 & -1175 & -200\\
    -1175 & 1255 & -200\\
    -200 & -200 & 1000
\end{bmatrix},$$ which is an $M$-matrix since it is a $Z$-matrix and all its principal minors are nonnegative.

\end{rem}

Next, we obtain the conditions for the group inverse of the combinatorial Laplacian of a recoverable complete network to be an $M$-matrix, or in an equivalent manner, for a recoverable complete network to have the $M$-property. For this, it suffices to take $d_j=c_j$ for  $j=1,\ldots,n$ in Proposition \ref{complete-c-d}. 

\begin{corollary}
\label{star-M-constant}
Given  $0<c_1\le \cdots\le c_n$,  $(\alpha\DD_{ \c}-\c\c^\top)^\#$ is an $M$-matrix if for any $k=2,\ldots,n$, the value $c_k$  satisfies the following inequalities
$$c_{k-1}\le  c_k \le 2(k-1)\Big(\sum\limits_{j=1}^{k-1}\frac{1}{c_j}\Big)^{-1}.$$
In particular, the above inequalities always hold when $c_1=\cdots=c_n$. 
\end{corollary}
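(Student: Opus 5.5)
The plan is to specialize Proposition \ref{complete-c-d} to the choice $\d=\c$, which by the discussion preceding the statement corresponds to the combinatorial Laplacian. With $d_j=c_j$ we have $\alpha=\sum_k c_k^2/d_k=\sum_k c_k$, and $\alpha\DD_\c-\c\c^\top$ is exactly the matrix $\widehat\LL(\c,\e)$.

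The computation is a direct substitution into the recursive bound of Proposition \ref{complete-c-d}. For each $k$ we have
$$\sum_{j=1}^{k-1}\frac{c_j^2}{d_j^2}=k-1,\qquad \sum_{j=1}^{k-1}\frac{c_j^2}{d_j^3}=\sum_{j=1}^{k-1}\frac{1}{c_j}.$$
Hence the recursive condition $d_{k-1}\le d_k\le 2\big(\sum_{j<k}c_j^2/d_j^2\big)\big(\sum_{j<k}c_j^2/d_j^3\big)^{-1}$ becomes exactly
$$c_{k-1}\le c_k\le 2(k-1)\Big(\sum_{j=1}^{k-1}\frac{1}{c_j}\Big)^{-1}.$$
Proposition \ref{complete-c-d} also needs the $c_k$ to be arbitrary positive numbers, which they are, and the $d_k$ to be ordered and satisfy the recursion, which is what the hypothesis says. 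Applying it, $(\alpha\DD_\c-\c\c^\top)^\#$ is an $M$-matrix.

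For the particular case $c_1=\cdots=c_n=c$, the left inequality holds with equality. The right-hand side equals $2(k-1)\cdot c/(k-1)=2c\ge c$, so both inequalities hold for every $k$.

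The only point needing care is the identification with the proposition's setup. Proposition \ref{complete-c-d} prescribes $c_1,d_1$ first and then chooses the $d_k$ recursively, whereas here $d_k=c_k$ is forced. This is harmless, because the proof of Proposition \ref{complete-c-d} uses only that the resulting sequence satisfies the recursive inequalities and is nondecreasing, so that $d_n=d^*$. It then concludes via Corollary \ref{sufficient}. No real obstacle is expected.
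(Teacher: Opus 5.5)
Your proposal is correct and follows the paper's own route: the paper proves this corollary simply by setting $d_j=c_j$ in Proposition \ref{complete-c-d}. Your explicit substitutions $\sum_{j<k}c_j^2/d_j^2=k-1$ and $\sum_{j<k}c_j^2/d_j^3=\sum_{j<k}1/c_j$, together with the check that the bound equals $2c\ge c$ when all $c_k$ are equal, fill in exactly the details the paper leaves implicit.
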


\begin{corollary}
\label{star-M}
Given $\beta>0$ and $0<d_1\le \cdots \le d_n$, the matrix
$$\begin{bmatrix}  \alpha d_1-\beta^2 & -\beta^2 & \cdots & -\beta^2\\
-\beta^2  & \alpha d_2-\beta^2 & \cdots &-\beta^2 \\
\vdots &\vdots & \ddots & \vdots\\
-\beta^2  & -\beta^2 & \cdots & \alpha d_n -\beta^2
\end{bmatrix}^\#,$$ where $\alpha=\beta^2\sum\limits_{k=1}^n\frac{1}{d_k}$,
is  an $M$-matrix when the values $d_k$, $k=2,\ldots,n$  are chosen recursively satisfying the following inequalities
$$0<d_{k-1}\le d_k\le 2\Big(\sum\limits_{j=1}^{k-1}\dfrac{1}{d_j^2}\Big)\Big(\sum\limits_{j=1}^{k-1}\frac{1}{d_j^3}\Big)^{-1},\hspace{.25cm} k=2,\ldots,n.$$
\end{corollary}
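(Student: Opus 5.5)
This is the special case $c_1=\cdots=c_n=\beta$ of Proposition \ref{complete-c-d}, so the plan is to identify the matrix with $\alpha\DD_\d-\c\c^\top$ for $\c=\beta\e$, check the hypotheses, and invoke that proposition.

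\emph{Identifying the matrix.} With $\c=\beta\e$ we have $\c\c^\top=\beta^2\e\e^\top$. Hence every off-diagonal entry of $\alpha\DD_\d-\c\c^\top$ is $-\beta^2$ and the diagonal entries are $\alpha d_k-\beta^2$. Moreover $\alpha=\sum_k c_k^2/d_k=\beta^2\sum_k 1/d_k$, which agrees with the $\alpha$ in the statement. So the displayed matrix is exactly $\alpha\DD_\d-\c\c^\top$ for this choice of $\c$ and the given $\d>0$. By Theorem \ref{complete:c} it is a singular, symmetric, irreducible $M$-matrix, so its group inverse exists.

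\emph{Checking the recursive inequalities.} With $c_j^2=\beta^2$ for all $j$, the upper bound in Proposition \ref{complete-c-d} is
$$2\Big(\sum_{j=1}^{k-1}\frac{\beta^2}{d_j^2}\Big)\Big(\sum_{j=1}^{k-1}\frac{\beta^2}{d_j^3}\Big)^{-1}=2\Big(\sum_{j=1}^{k-1}\frac{1}{d_j^2}\Big)\Big(\sum_{j=1}^{k-1}\frac{1}{d_j^3}\Big)^{-1},$$
since the factor $\beta^2$ cancels. This is precisely the bound assumed in the statement. The choice $c_k=\beta$ for $k\ge 2$ is allowed because the proposition lets those $c_k$ be chosen arbitrarily.

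\emph{Conclusion.} With $\c=\beta\e$ and $d_1>0$ given, and $d_2,\ldots,d_n$ satisfying these recursive inequalities, Proposition \ref{complete-c-d} yields that $(\alpha\DD_\d-\c\c^\top)^\#$ is an $M$-matrix, which is the claim. No step is a real obstacle; the only point needing care is the cancellation of $\beta^2$ in both $\alpha$ and the recursive bound.
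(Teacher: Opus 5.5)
Your proposal is correct and follows exactly the route the paper intends: the corollary is stated without separate proof, immediately after Proposition \ref{complete-c-d}, as its specialization to $\c=\beta\e$. In that case $\alpha=\beta^2\sum_k 1/d_k$, and the factor $\beta^2$ cancels in the recursive upper bound, just as you noted.
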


Note that the choice $d_j=\beta,\; j=1,\ldots,n$ in the above corollary recovers the case of the weighted Laplacian with a positive constant weight $\beta$  for a complete graph. Hence, we newly show that a complete graph has the $M$-property.
\vspace{.2cm}

\section{Preservation of the $M$-property under network equivalence}\label{complete-star}

We conclude this work by analyzing the behavior of the $M$-property when considering a recoverable complete network on $n$ vertices and its equivalent star on $n+1$ vertices.
For this purpose, we recall a result (see Theorem \ref{starm}) from \cite{CEPS2026}, where the authors investigated the $M$-property for star networks.

Let $\c=(c_1,\ldots,c_n)^\top >\sf 0$ and $\d=(d_1,\ldots,d_n)^\top >0$ be as defined above, and set $\alpha=\displaystyle \sum\limits_{k=1}^n\dfrac{c_k^2}{d_k}$. Consider the matrix
$$\sf M(\c,\d)=\begin{bmatrix}  \alpha& -c_1 & \cdots & -c_n\\
-c_1 & d_1& \cdots & 0\\
\vdots &\vdots & \ddots & \vdots\\
-c_n & 0 & \cdots & d_n
\end{bmatrix}.$$ 
As $\c$ and $\d$ range over all positive vectors, the family $\sf M(\c,\d)$ represents all singular, symmetric, and irreducible $M$-matrices arising from an $(n+1)$-star network with conductances given by $c(x_0,x_i)=c_i$, for $i=1,\ldots,n$. Here, $x_0$ denotes the center of the star, and $x_1,\ldots,x_n$ are its adjacent vertices; see \cite{CEPS2026}.

Alternatively, it is immediate that $\sf M(\c,\d)$ is irreducible (since the underlying star is connected), symmetric (by construction), and a $Z$-matrix (as its off-diagonal entries are nonpositive). Consider the positive vector ${\sf x}=\Big[1,\frac{c_1}{d_1},\ldots,\frac{c_n}{d_n}\Big]^\top$. A direct computation shows that $\MM(\c,\d)\sf x=0$, and hence ${\sf x}$ belongs to the kernel of $\sf M(\c,\d)$. To prove that $\sf M(\c,\d)$ is an $M$-matrix, we show that, for any $\epsilon>0$, the matrix $\MM(\c,\d)+\epsilon \II$ is an invertible $M$-matrix. Indeed, since $(\MM(\c,\d)+\epsilon \II){\sf x}=\epsilon {\sf x}>{\sf 0}$, the matrix $\MM(\c,\d)+\epsilon \II$ is semipositive. Therefore, $\MM(\c,\d)+\epsilon \II$ is an invertible $M$-matrix, and consequently, $\sf M(\c,\d)$ is an $M$-matrix. In particular, $\MM(\c,\d)$ is a singular, irreducible, symmetric $M$-matrix arising from a star network. Following the results in \cite{CEM2019}, after applying a star–complete transformation, the matrix of the corresponding Schr\"odinger operator on the complete network is given by $\alpha \DD_\d-\c\c^\top$. From an electrical  point of view, both networks are equivalent, since the effective resistance on the complete network coincides with the restriction of the effective resistance of the star; see \cite[Corollary 7.8]{CEM2019}. Moreover, as mentioned above, $(\alpha \DD_\d-\c\c^\top)^\#$ can be obtained from $\MM(\c,\d)^\#$ and vice versa. The question we address in this section is whether this electrical equivalence also implies equivalence of the $M$-property; that is, whether for given $\c,\d>0$, the matrix $(\alpha \DD_\d-\c\c^\top)^\#$ is an $M$-matrix if and only if $\MM(\c,\d)^\#$ is also an $M$-matrix. We begin by recalling the characterization of the $M$-property for star networks.

\begin{theorem}\cite[Theorem 4.6]{CEPS2026}\label{starm}
Given $\c,\d>0$,  $\sf M(\c,\d)^\#$ is an $M$-matrix iff 
$$\sum_{k=1}^n\frac{c^2_k}{d^3_k}\leq \frac{1}{d^*}\Big(1+\sum\limits_{k=1}^n\frac{c^2_k}{d^2_k}\Big).$$
\end{theorem}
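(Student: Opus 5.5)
The plan is to compute $\MM(\c,\d)^\#$ explicitly, in the spirit of Lemma \ref{groupinv}, and then read off the signs of its off-diagonal entries. Since $\MM(\c,\d)$ is a singular, symmetric, positive semidefinite $M$-matrix, its group inverse is symmetric and positive semidefinite. A symmetric positive semidefinite $Z$-matrix is an $M$-matrix, so it suffices to characterize when all off-diagonal entries of $\MM(\c,\d)^\#$ are nonpositive.

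\textbf{Step 1: a generalized inverse.} Index rows and columns by $0,1,\ldots,n$, with $0$ the center. Let ${\sf x}=[1,\frac{c_1}{d_1},\ldots,\frac{c_n}{d_n}]^\top$ be the kernel vector found above. Set $\sf N=\begin{bmatrix}0&0\\0&\DD_\d^{-1}\end{bmatrix}$. The Schur complement of $\DD_\d$ in $\MM(\c,\d)$ is $\alpha-\c^\top\DD_\d^{-1}\c=0$, and a direct block computation shows $\MM\,{\sf N}\,\MM=\MM$.

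\textbf{Step 2: the group inverse.} Let ${\sf Q}=\II-\frac{1}{\|{\sf x}\|^2}{\sf x}{\sf x}^\top$ be the orthogonal projection onto ${\sf x}^\bot$, which is the range of $\MM$. I claim $\MM^\#={\sf Q}{\sf N}{\sf Q}$, using that for a symmetric matrix the group inverse equals the Moore--Penrose inverse. The four Penrose conditions should follow from three facts: ${\sf Q}\MM=\MM{\sf Q}=\MM$, $\MM{\sf N}\MM=\MM$, and the range of ${\sf Q}{\sf N}{\sf Q}$ lies in ${\sf x}^\bot$. This verification is the only step needing care; the rest is bookkeeping.

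\textbf{Step 3: the entries.} Introduce
\[
{\sf y}={\sf N}{\sf x}=\Big[0,\tfrac{c_1}{d_1^2},\ldots,\tfrac{c_n}{d_n^2}\Big]^\top,\qquad
\sigma=\|{\sf x}\|^2=1+\sum_{k=1}^n\frac{c_k^2}{d_k^2},\qquad
\tau={\sf x}^\top{\sf N}{\sf x}=\sum_{k=1}^n\frac{c_k^2}{d_k^3}.
\]
Expanding gives ${\sf Q}{\sf N}{\sf Q}={\sf N}-\frac1\sigma({\sf x}{\sf y}^\top+{\sf y}{\sf x}^\top)+\frac{\tau}{\sigma^2}{\sf x}{\sf x}^\top$. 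For leaves $i\ne j$, where ${\sf N}_{ij}=0$, the entry is
\[
\frac{c_ic_j}{d_id_j\,\sigma}\Big(\frac{\tau}{\sigma}-\frac1{d_i}-\frac1{d_j}\Big).
\]
For the center and a leaf $j$, using $x_0=1$ and $y_0=0$, the entry is
\[
\frac{c_j}{d_j\,\sigma}\Big(\frac{\tau}{\sigma}-\frac1{d_j}\Big).
\]

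\textbf{Step 4: conclusion.} Nonpositivity of the center--leaf entries is equivalent to $\tau\le \sigma/d_j$ for all $j$, that is, $\tau\le\sigma/d^*$, which is exactly the stated inequality. This condition already implies $\tau\le\sigma(\frac1{d_i}+\frac1{d_j})$, so the leaf--leaf entries are then automatically nonpositive. Hence $\MM(\c,\d)^\#$ is a $Z$-matrix, and therefore an $M$-matrix, if and only if $\sum_k\frac{c_k^2}{d_k^3}\le\frac1{d^*}\big(1+\sum_k\frac{c_k^2}{d_k^2}\big)$.
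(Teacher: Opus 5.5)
Your proof is correct and complete. The block identity $\MM{\sf N}\MM=\MM$ holds exactly because $\alpha=\c^\top\DD_\d^{-1}\c$, and the expansion of ${\sf Q}{\sf N}{\sf Q}$ and both entry formulas are right. For instance, with $\c=\d=(1,2,1)^\top$ they reproduce the matrix $\MM(\c,\c)^\#$ displayed in Remark \ref{ctos}. The step you left at ``should follow'' closes in one line. Since $\MM$ is symmetric with range ${\sf x}^\bot$, you have $\MM\MM^\#=\MM^\#\MM={\sf Q}$, hence $\MM^\#=\MM^\#(\MM{\sf N}\MM)\MM^\#={\sf Q}{\sf N}{\sf Q}$.

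The paper itself gives no proof of this statement; it quotes it from \cite{CEPS2026}. In the paper's framework the natural route goes through the Green function of the star. There, the adjacency criterion recalled in Section 3 reduces the question to adjacent pairs, which in a star are the center--leaf pairs. Your Step 4 recovers that reduction by hand, since $\tau\le\sigma/d^*$ forces every leaf--leaf entry to be negative.

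Methodologically, your argument is the same mechanism the paper uses for the complete network. In Lemma \ref{groupinv} one has $\c^\top\DD^{-1}\c=1$, so $\DD^{-1}$ is a $\{1\}$-inverse of $\DD-\c\c^\top$. The formula produced there via Lemma \ref{bakres} is precisely $({\sf I}-\w\w^\top/\|\w\|^2)\,\DD^{-1}\,({\sf I}-\w\w^\top/\|\w\|^2)$ expanded.

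What your version buys is that it needs no rank-one-perturbation formula. Such a formula would not apply directly anyway, because $\MM(\c,\d)$ is not a rank-one perturbation of a diagonal matrix when $n\ge 2$. It also gives the full matrix $\MM(\c,\d)^\#$.

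A side benefit concerns the comparison with the complete network. With $w_k=c_k/d_k$, the sign-determining bracket $\tau/\sigma-1/d_i-1/d_j$ of your leaf--leaf entries is the bracket of Lemma \ref{groupinv} with $\sigma=1+\|\w\|^2$ in place of $\|\w\|^2$. So Proposition \ref{equivalence} becomes a direct comparison of $\tau\le\sigma/d^*$ with $\tau\le(\sigma-1)(1/d^*+1/d^{**})$. The right-hand sides of these two inequalities differ by $(\sigma-1)/d^{**}-1/d^*$.
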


Next, we show that, in general, the answer to the previously raised question is negative in both directions. In other words, there exist $\c,\d> 0$ such that ${\MM}(\c,\d)^\#$ is an $M$-matrix, whereas $(\alpha \DD_\d-\c\c^\top)^\#$ is not an $M$-matrix, and there also exist $\c,\d>\sf 0$ such that $(\alpha \DD_\d-\c\c^\top)^\#$ is an $M$-matrix, whereas $\sf M(\c,\d)^\#$ is not an $M$-matrix. Therefore, the $M$-property cannot be fully characterized in purely electrical terms.

\begin{ex}
 Let $\c=(1,1,1)^\top$ and $\d=(2,5,5)^\top$, which imply that $\alpha=\frac{9}{10}$ and also that 
 $$\MM(\c,\d)=\begin{bmatrix}
     \frac{9}{10} & -1 & -1 & -1\\
     -1 & 2 & 0& 0\\
     -1 & 0 & 5 & 0\\
     -1 & 0 & 0 & 5
 \end{bmatrix}, \quad \textit{and}  \hspace{0.4cm} \alpha\DD_{ \d}-\c\c^\top=\begin{bmatrix}
     \frac{4}{5} & -1 & -1\\
     -1 & \frac{7}{2} & -1\\
     -1 & -1 & \frac{7}{2}
 \end{bmatrix}.$$ 
 Then, $\MM(\c,\d)^\#$ is an $M$-matrix as the inequality $\displaystyle \sum\limits_{k=1}^3\frac{c_k^2}{d_k^3}=\frac{141}{1000}<\frac{1}{d^*}\Big(1+\sum\limits_{k=1}^3\frac{c_k^2}{d_k^2}\Big)=\frac{133}{500}$ is satisfied, but 
 $(\alpha \DD_\d-\c\c^\top)^\#$   is not an $M$-matrix as $$\sum\limits_{k=1}^3\frac{c_k^2}{d_k^3}=\frac{141}{1000}\nleq \frac{33}{250}= \Big(\frac{1}{d_2}+\frac{1}{d_3}\Big)\sum\limits_{k=1}^3\frac{c_k^2}{d_k^2}.$$
\end{ex}

\begin{ex}
Let $\c=(1,1,1)^\top$ and $\d=(1,2,1)^\top$, which imply that $\alpha=\frac{5}{2}$ and also that 
$$\MM(\c,\d)=\begin{bmatrix}
    \frac{5}{2} & -1 & -1 & -1\\
    -1 & 1 & 0 & 0\\
    -1 & 0 & 2 & 0\\
    -1 & 0 & 0 & 1
\end{bmatrix}, \quad \textit{and}  \hspace{0.4cm} \alpha\DD_\d-\c\c^\top=\begin{bmatrix}
    \frac{3}{2} & -1 & -1\\
    -1 & 4 & -1\\
    -1 & -1 & \frac{3}{2}
\end{bmatrix}.$$
Since $d^*=2$ and $d^{**}=1$, $(\alpha\DD_{ \d}-\c\c^\top)^\#$ is an $M$-matrix as the inequality 
$$  \sum\limits_{k=1}^3\dfrac{c_k^2}{d_k^3}=\dfrac{17}{8}\le \dfrac{27}{8}= \Big(\dfrac{1}{d^*}+\dfrac{1}{d^{**}}\Big)\Big(\sum\limits_{k=1}^3\dfrac{c_k^2}{d_k^2}\Big)$$ 
holds, but $\MM(\c,\d)^\#$ is not an $M$-matrix as
$\displaystyle \sum\limits_{k=1}^3\frac{c_k^2}{d_k^3}=\dfrac{17}{8}\nleq \dfrac{13}{8}=\frac{1}{d^*}\Big(1+\sum\limits_{k=1}^3\frac{c_k^2}{d_k^2}\Big).$ 
\end{ex}

In the following result, which directly follows from the characterization obtained above, we identify conditions on the parameters $\c$ and $\d$ under which the $M$-property is preserved between a star and its associated complete network.

\begin{propo}
\label{equivalence}
Given $\c,\d>0$, the following statements hold:
\begin{enumerate}[{\rm (i)}]
\item If $\displaystyle\sum\limits_{k=1}^n\frac{c_k^2}{d_k^2}> \dfrac{d^{**}}{d^*}$ and $\sf M(\c,\d)^\#$ is an $M$-matrix, then $(\alpha\DD_{\d}-\c\c^\top)^\#$ is an $M$-matrix.
\item If $\displaystyle\sum\limits_{k=1}^n\frac{c_k^2}{d_k^2}< \dfrac{d^{**}}{d^*}$ and $(\alpha\DD_{\d}-\c\c^\top)^\#$ is an $M$-matrix, then $\sf M(\c,\d)^\#$ is an $M$-matrix.
\item If $\displaystyle\sum\limits_{k=1}^n\frac{c_k^2}{d_k^2}=\dfrac{d^{**}}{d^*}$, then $\sf M(\c,\d)^\#$ is an $M$-matrix if and only if $(\alpha\DD_{\d}-\c\c^\top)^\#$ is an $M$-matrix.
\end{enumerate}
\end{propo}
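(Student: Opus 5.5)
Write $S_3=\sum_k c_k^2/d_k^3$ and $S_2=\sum_k c_k^2/d_k^2$. We compare the two criteria directly. By Theorem \ref{complete:c}, $(\alpha\DD_\d-\c\c^\top)^\#$ is an $M$-matrix iff
$$S_3\le \Big(\frac{1}{d^*}+\frac{1}{d^{**}}\Big)S_2=:R_K.$$
By Theorem \ref{starm}, $\MM(\c,\d)^\#$ is an $M$-matrix iff
$$S_3\le \frac{1}{d^*}\big(1+S_2\big)=:R_S.$$
Both conditions bound the same quantity $S_3$ from above. So the whole proof reduces to comparing the right-hand sides $R_K$ and $R_S$.

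Computing the difference gives
$$R_K-R_S=\frac{S_2}{d^{**}}-\frac{1}{d^*}=\frac{1}{d^{**}}\Big(S_2-\frac{d^{**}}{d^*}\Big).$$
Since $d^{**}>0$, the sign of $R_K-R_S$ is exactly the sign of $S_2-d^{**}/d^*$. This quantity is the one appearing in the three hypotheses.

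Each item now follows from transitivity of inequalities.

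(i) If $S_2>d^{**}/d^*$, then $R_S<R_K$. The star condition gives $S_3\le R_S$, hence $S_3\le R_K$, and the complete-network condition holds.

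(ii) If $S_2<d^{**}/d^*$, then $R_K<R_S$. The complete-network condition gives $S_3\le R_K$, hence $S_3\le R_S$, and the star condition holds.

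(iii) If $S_2=d^{**}/d^*$, then $R_K=R_S$, so the two criteria coincide.

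The only point needing care is that $d^*$ and $d^{**}$ in Theorem \ref{starm} and Theorem \ref{complete:c} are the same quantities attached to $\d$. This holds because $d^{**}$ was defined for every positive $\d$ with $n\ge 2$; when the maximum is repeated, $d^{**}=d^*$ and the threshold is $1$. Beyond this there is no real obstacle, since the result is an algebraic comparison of the two characterizations already established.
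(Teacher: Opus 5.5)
Your proposal is correct and is the same argument the paper intends. The paper gives no separate proof and says the result ``directly follows'' from comparing the criteria of Theorem \ref{complete:c} and Theorem \ref{starm}; your identity $R_K-R_S=\frac{1}{d^{**}}\big(S_2-\frac{d^{**}}{d^*}\big)$, followed by transitivity of the inequalities, is exactly that comparison.
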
 

\begin{corollary}
Given $\c,\d>0$, the following statements hold:
\begin{enumerate}[{\rm (i)}]
\item If $||\c||\ge \sqrt{d^*d^{**}}$ and $\sf M(\c,\d)^\#$ is an $M$-matrix, then $(\alpha\DD_{\d}-\c\c^\top)^\#$ is an $M$-matrix.
\item If $||\c||\le d_*\sqrt{\dfrac{d^{**}}{d^*}}$ and $(\alpha\DD_{\d}-\c\c^\top)^\#$ is an $M$-matrix, then $\sf M(\c,\d)^\#$ is an $M$-matrix.
\end{enumerate}
 \end{corollary}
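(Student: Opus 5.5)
The corollary will be deduced from Proposition \ref{equivalence}, parts (i) and (iii) for the first claim and parts (ii) and (iii) for the second. In each case it is enough to show that the hypothesis on $||\c||$ forces the quantity $S:=\sum_{k=1}^n c_k^2/d_k^2$ onto the correct side of $d^{**}/d^*$, with equality allowed, since in the equality case (iii) gives the equivalence anyway. I will use only the elementary bounds $d_*\le d_k\le d^*$ for all $k$.

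For (i), assume $||\c||\ge\sqrt{d^*d^{**}}$. Since $d_k\le d^*$ for every $k$, we have
$$S\ge \frac{||\c||^2}{(d^*)^2}\ge \frac{d^*d^{**}}{(d^*)^2}=\frac{d^{**}}{d^*}.$$
If the inequality is strict, Proposition \ref{equivalence}(i) applies. If equality holds, Proposition \ref{equivalence}(iii) applies. In both cases, $\MM(\c,\d)^\#$ being an $M$-matrix implies that $(\alpha\DD_\d-\c\c^\top)^\#$ is an $M$-matrix.

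For (ii), assume $||\c||\le d_*\sqrt{d^{**}/d^*}$. Using $d_k\ge d_*$, we get
$$S\le \frac{||\c||^2}{d_*^2}\le \frac{d^{**}}{d^*}.$$
Here Proposition \ref{equivalence}(ii) covers strict inequality and (iii) covers equality, which yields the claim.

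There is no real obstacle. The only point needing care is the boundary case $S=d^{**}/d^*$. Parts (i) and (ii) of Proposition \ref{equivalence} are stated with strict inequalities, so part (iii) must be invoked explicitly in that case to close the gap.
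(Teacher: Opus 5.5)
Your argument is correct and is exactly the intended one. The paper gives no separate proof; the corollary follows from Proposition \ref{equivalence} through the bounds $||\c||^2/(d^*)^2\le\sum_k c_k^2/d_k^2\le||\c||^2/d_*^2$, and your use of part (iii) for the boundary case $\sum_k c_k^2/d_k^2=d^{**}/d^*$ is how the paper handles the analogous step in the proof of Corollary \ref{recolapla}.
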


When $\sf d=c$, that is, in the Laplacian setting, we can recover the $M$-property of a recoverable complete network $\Gamma$ from the $M$-property of a star network. The converse implication, however, does not generally hold (see Remark \ref{ctos}). 

\begin{corollary}\label{recolapla}
    Let $\c>0$ and $0<t\le \sqrt{n}$. If $\MM(\c,t\c)^\#$ is an $M$-matrix, then $(\alpha \DD_{\c}-\c\c^\top)^\#$ is an $M$-matrix, where $\alpha=\sum\limits_{k=1}^n c_k.$
\end{corollary}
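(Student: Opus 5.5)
The plan is to reduce the statement to Proposition \ref{equivalence} applied to the pair $(\c,t\c)$, and then to remove the factor $t$ with the scaling corollary for $(\widehat\alpha\DD_{\widehat\d}-\widehat\c\widehat\c^\top)^\#$.

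First, set $\d=t\c$, so $d_k=tc_k$ for every $k$. The star matrix in the hypothesis is exactly $\MM(\c,\d)$, and the associated complete-network matrix is $\alpha_t\DD_{t\c}-\c\c^\top$ with $\alpha_t=\sum_k c_k^2/(tc_k)=\frac1t\sum_k c_k$. The key quantity in Proposition \ref{equivalence} is
$$\sum_{k=1}^n\frac{c_k^2}{d_k^2}=\sum_{k=1}^n\frac{1}{t^2}=\frac{n}{t^2}\ge 1,$$
where the inequality uses $0<t\le\sqrt n$. Since $d^{**}\le d^*$ by definition, we have $d^{**}/d^*\le 1$. Hence $\sum_k c_k^2/d_k^2\ge d^{**}/d^*$.

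Next, I split into two cases.
\begin{itemize}
\item If the inequality is strict, Proposition \ref{equivalence}(i) applies.
\item If equality holds (which forces $t=\sqrt n$ and $d^{**}=d^*$), Proposition \ref{equivalence}(iii) applies.
\end{itemize}
In either case, the assumption that $\MM(\c,t\c)^\#$ is an $M$-matrix gives that $(\alpha_t\DD_{t\c}-\c\c^\top)^\#$ is an $M$-matrix.

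Finally, I pass from $\d=t\c$ back to $\d=\c$. By the corollary on scalings $\widehat\d=t\d$, $\widehat\c=s\c$ (here with $s=1$ and the scaling factor $1/t$ applied to $t\c$), $(\alpha_t\DD_{t\c}-\c\c^\top)^\#$ is an $M$-matrix iff $(\alpha\DD_{\c}-\c\c^\top)^\#$ is, where $\alpha=\sum_k c_k^2/c_k=\sum_k c_k$. One can also see this directly: the condition in Theorem \ref{complete:c} is invariant under $\d\mapsto t\d$, because both sides scale by $t^{-3}$.

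The only point needing care is the boundary case $n/t^2=d^{**}/d^*$. Part (i) of Proposition \ref{equivalence} requires a strict inequality, so this case must be routed through part (iii) instead. That case distinction is the main (mild) obstacle; the rest is bookkeeping of $\alpha$ under scaling.
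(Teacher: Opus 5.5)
Your proof is correct and follows the paper's argument: set $\d=t\c$, note that $\sum_k c_k^2/d_k^2=n/t^2\ge 1\ge d^{**}/d^*$, and invoke Proposition \ref{equivalence}(i) or (iii). Your final scaling step is fine but even simpler than you make it, since $\alpha_t\DD_{t\c}=\bigl(\tfrac1t\sum_k c_k\bigr)\,t\DD_{\c}=\alpha\DD_{\c}$, so the two complete-network matrices coincide.
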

\proof Taking $\d=t\c$, the hypothesis of Proposition \ref{equivalence} (i) or (iii) is satisfied. Hence, the result.\qed

\begin{rem}\label{ctos}
    The converse of Corollary \ref{recolapla} is not true. For instance taking $\c=(1,2,1)^\top$ and the matrix $$\alpha \DD_\c-\c\c^\top=\begin{bmatrix}
        3 & -2 & -1\\
        -2 & 4 & -2\\
        -1 & -2 & 3
    \end{bmatrix},$$  its group inverse is
    $\frac{1}{72}\begin{bmatrix}
        11 & -4 & -7\\
        -4 & 8 & -4\\
        -7 & -4 & 11
    \end{bmatrix}$ which is an $M$-matrix, since it is a $Z$-matrix having all nonnegative principal minors.
    But, $\MM(\c,\c)=\begin{bmatrix}
        4 & -1 & -2 & -1\\
        -1 & 1 & 0 & 0\\
        -2 & 0 & 2 & 0\\
        -1 & 0 & 0 & 1
    \end{bmatrix},$ has the group inverse $\frac{1}{32}\begin{bmatrix}
        5 & -3 & 1 & -3\\
        -3 & 21 & -7 & -11\\
        1 & -7 & 13 & -7\\
        -3 & -11 & -7 & 21
    \end{bmatrix}$, which is not an $M$-matrix.
    
\end{rem}

\subsection*{Acknowledgements}
 Sweta Patra thanks the Office of Global Engagement, IIT Madras for the partial financial assistance to visit Universitat Polit\`ecnica de Catalunya, Barcelona through the IIE program, as well as the Departament de Matem\`atiques, UPC. Part of her work was performed during her visit, and she thanks the first two authors for their excellent hospitality. This work has been partially supported by the Spanish
	Research Council (Ministerio de Ciencia e Innovaci\'on) under project 
	PID2021-122501NB-I00, and  by the Universitat Polit\`ecnica de Catalunya through the AGRUPS-UPC 2025 funds.

\subsection*{Conflict of interest}
 The authors declare that they have no known competing financial interests or personal relationships that could have appeared to influence the work reported in this paper.

\end{document}